\newtheorem{prop}{Proposition} 
\newtheorem{lem}{Lemma} 
\newtheorem{defn}{Definition} 
\newtheorem{exa}{Example} 
\newtheorem{rem}{Remark}
\newtheorem{cor}{Corollary}
\DeclareMathOperator\Sl{SL}
\newcommand\Z{\mathbb Z}
\newcommand\R{\mathbb R}
\newcommand\C{\mathbb C}
\def\P{\mathbb P}
\def\H{\mathcal H}
\newcommand\F{\mathcal F}            
\newcommand\J{\mathcal J}
\def\O{\mathcal O}
\def\<{\langle}
\newcommand\z{\xi}  
\def\a{\alpha}
\title{Reduction of binary forms via the hyperbolic center of mass}
\author{A. Elezi}
\address{Department of Mathematics and Statistics \\ American University \\ Washington, DC, 20016. \\
Email: \, aelezi@american.edu}
\author{T. Shaska}
\address{Department of Mathematics and Statistics \\ Oakland  University \\ Rochester, MI, 48309. \\
Email:  \, shaska@oakland.edu}
\date{}                                           
\begin{document}
\maketitle


\begin{abstract}
In this paper we provide an alternative reduction theory for real, binary forms with no real roots. Our approach is completely geometric, making use of the notion of hyperbolic center of mass in the upper half-plane. It appears that our model compares favorably with existing reduction theories, at least in certain aspects related to the field of definition. Various tools and features of hyperbolic geometry that are interesting in themselves, but also relevant for our and various other reduction theories papers (\cite{julia} and \cite{SC}), are also treated in detail and in a self-contained way here.
\end{abstract}
\section{Introduction}\label{red-bin-forms}

During the XIX-century, the mathematical community invested much efforts in developing a reduction theory of binary forms similar to that of quadratic forms, especially since invariant theory was at the forefront of mathematics.  The idea of reduction on a set $A$ with a right $\Sl_2(\Z)$-action is to associate to any element $a\in A$ a covariant point $\z(a)$ in the upper half-plane $\H_2$, i.e to construct an $\Sl_2(\Z)$-equivariant map $\z:A\rightarrow \H_2$. The modular group $\Sl_2(\Z)$ acts on binary forms $F(X,Z)$ via a linear change of variables and on the upper half-plane via M{\"o}bius transformations. A practical motivation for the reduction in this setting is: given a real binary form, can we find an $\Sl_2(\Z)$-equivalent with minimal coefficients? This question has a positive answer for quadratics but it is still not very well understood for higher degree forms.  
  
In $1917$, G. Julia introduced in his thesis \cite{julia} a reduction theory for binary forms with real coefficients, although explicit and complete answers were provided only in degrees three and four. To every binary form $F(X,Z)$ with real coefficients, Julia associated a positive definite quadratic $\J_F$ called the \textit{Julia quadratic}. The set of positive definite quadratics parametrizes the upper half-plane via one of its roots. Hence, there exists a well defined map, called \textit{the zero map}, from the set of real binary forms to the upper half plane. This map is $\Sl_2(\Z)$-equivariant.  A binary form is called \textit{reduced} if its image via the zero map is in the fundamental domain $\F$ of $\Sl_2(\Z)$.   

In $1999$,  Cremona \cite{cremona-red} used the ideas of Julia to explore the reduction for cubic and quartic binary forms.  He showed that the coefficients of the Julia quadratic of a cubic form $F$ are polynomial values of of the coefficients of $F$ and this does not happen for higher degree forms. 

In \cite{SC} Cremona and Stoll developed a reduction theory in a unified setting for binary forms with real or complex coefficients. Generalizing Julia's work, a positive definite Hermitian quadratic $\J_F$ is associated to every binary complex form $F(X,Z)$ of degree $n\geq 2$. In his thesis Julia showed the existence of $\J_F$ and proved that it is a \textit{covariant} of the binary form. The uniqueness of $\J_F$ was shown in  \cite{SC}. Positive definite Hermitian forms parametrize the upper half-space $\H_3$.  This upper half-plane $\H_2$ is contained in the upper half-space $\H_3$ as a vertical cross section (see the following section). When the form $F(X,Z)$ has real coefficients, compatibility with complex conjugation (see the comments after Corollary \ref{tangent vectors}) forces $\J_F$ to live in $\H_2$. It is in this sense that working in $\H_3$ unifies the theory of real and complex binary forms. A degree $n$ complex binary form $F(X,Z)$ is called \textit{reduced} when its zero map value $\z(\J_F)$ is in the fundamental domain of the action of the modular group $\Sl_2(\C)$ on $\H_3$. 

In the works cited above, the term \textit{reduced binary form} means reduced in the $\Sl_2 (\Z)$ orbit.  It is expected that the reduced forms have smallest size coefficients in such orbit.  In \cite{b-sh} the concept of height was defined for forms defined over any ring of integers $\O_K$, for any number field $K$, and the notion of \textit{minimal absolute height} was introduced. In \cite{beshaj}, the author suggests an algorithm for determining the minimal absolute height for binary forms.  Continuing with this idea, a database of binary sextics of minimal absolute height $\mathfrak h \leq 10$ together with many computational aspects of binary sextics are included in \cite{data}. 

The primary goal of this paper is (a) to provide in a self-contained way all the details and the background of the geometry behind the previously mentioned binary form reductions and (b) to introduce an alternative reduction based on the pure geometric notion of hyperbolic center of mass in $\H_2$. For cubics and quartics, in \cite{julia} Julia uses geometric constructions to establish the barycentric coordinates $t_1, \dots , t_n$ of the zero map in the hyperbolic convex hull of the roots of $F$. In \cite{SC} a slightly different positive definite Hermitian form is used for the reduction of binary complex forms.  Our reduction is based solely on geometric ideas. We will discuss whether such reduction has any benefits compared to the previous ones.

In section $2$, we describe in detail the reduction relevant features of the hyperbolic geometry of the upper half plane $\H_2$ and upper half-space $\H_3$. These spaces are shown to parameterize respectively the positive definite quadratics and the positive definite Hermitian forms. We prove that these parameterizations respect the corresponding structures: for any $n$ points $w_1, \dots , w_n \in \H_3$, the hyperbolic convex hull of these points parametrizes the positive linear combinations $\sum_{i=1}^n \lambda_i H_{w_i} (x)$, where $H_{w_i} (x)$ is the positive definite Hermitian form corresponding to $w_i$. 

In section $3$, we summarize the reduction theory developed in \cite{julia} and \cite{SC}. We focus especially on the geometrical aspects of the zero map and the reduction, as these are of special interest to us.

In section $4$ we define the \textit{hyperbolic center of mass} of a collection $\{w_1,...w_n\}\subset \H_2$ as the unique point $x$ inside their hyperbolic convex hull which minimizes 
\break $\sum_{i=1}^n \cosh (d_H ({\mathbf x}, w_i)$ (here $d_H$ is the hyperbolic distance). To each real binary form $F(X,Z)$ with no real roots, our alternative zero map associates the hyperbolic center of mass of its roots. We show that this map is $SL_2 (\R)$ equivariant, hence it defines a new reduction algorithm. We note that our zero map is different from the one used in \cite{julia} or \cite{SC}

It does seem that computationally this reduction does produce binary forms of smaller height as in the case of reduction suggested by Julia or Cremona/Stoll. Naturally, one would like to determine how "far" this  zero map is from the zero map suggested by Julia or whether one can get examples that such different reductions gives different results.  In section 4 we perform some computations with binary forms with no real roots (see also \cite{beshaj} for totally complex forms). 


\section{The hyperbolic geometry of positive definite binary forms} 

In this section we present some features of hyperbolic geometry that are not only relevant for the reduction theory of binary forms. but are also interesting on their own. We also establish a correspondence between hyperbolic spaces and positive definite quadratic forms.

\subsection{The hyperbolic plane $\H_2$} The upperhalf-plane equipped with the Riemanian metric $$ds^2=\frac{dx^2+dy^2}{y^2}$$ is one of the models of the two dimensional hyperbolic space. It is denoted by $\H_2$.  The geodesics of the Riemaniann manifold $\H_2$, i.e the hyperbolic equivalents of Euclidean straight lines, are either semicircles $C_{a,b}$ with diameter from $A(a,0)$ to $B(b,0)$ on the real axis, or the vertical rays $C_a$ with origin at $x=a$. In the standard literature, the points $A(a,0), B(b,0)$ are called \textit{the ideal points} of the geodesic $C_{a,b}$, likewise $A(a,0)$ and $\infty$ are the ideal points of $C_a$. They live in the boundary of $\H_2$ as it can be seen from Fig.~\ref{fig1}.

\begin{figure}[htbp] 
   \centering
\begin{tikzpicture}[/.style={blue!30,very thin},scale=0.65]
    \draw [help lines] (-2, 0)  (16, 5);
    \draw[->, thick] (-2, 0) -- (16, 0) node[below]{\footnotesize $x$};
    \draw[->, thick] (0, 0) -- (0, 5) node[right]{\footnotesize $y$};

\draw[,  thick ] (7, 0) arc (0:180:3cm);

    \draw[-, ,thick] (11, 0) -- (11, 5);

    \node at (1, -0.5) {$A$};
        \node at (7, -0.5) {$B$};
            \node at (11, -0.5) {$A$};
\end{tikzpicture}

   \caption{Geodesics and their ideal points}
   \label{fig1}
\end{figure}
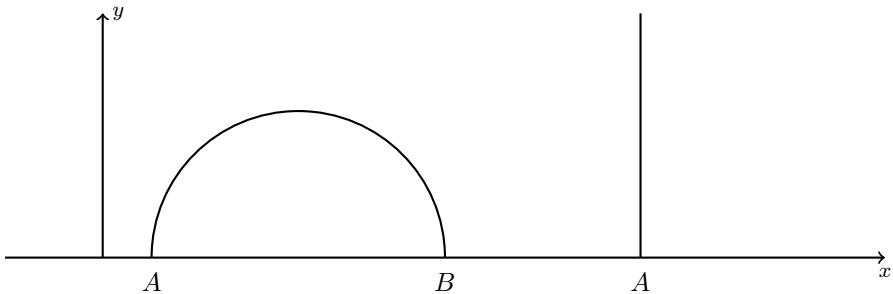

The hyperbolic distance between two points $z=x+{\bf i}y$ and $w=u+{\bf i}v$ is computed as follows. Let $z_{\infty}, w_{\infty}$ be the ideal points of the geodesic through $z,w$, where $z_{\infty}$ is the one closer to $z$; see Fig.~\ref{fig2}.  
\begin{figure}[htbp] 
   \centering

\begin{tikzpicture}[/.style={blue!30,very thin},scale=0.6]
    \draw [help lines] (-2, 0) (16, 5);
    \draw[->, thick] (-2, 0) -- (16, 0) node[below]{\footnotesize $x$};
    \draw[->, thick] (0, 0) -- (0, 5) node[right]{\footnotesize $y$};

\draw[, thick ] (7, 0) arc (0:180:3cm);

    \draw[-, ,thick] (11, 0) -- (11, 5);

    \node at (11.5, 2) {$z$};
    \node at (11.5, 4) {$w$};
    
    \foreach \x/\y in {   4/3, 2/2.2, 11/2, 11/4}
    \filldraw[] (\x, \y) circle(3pt);

    \node at (2, 2.7) {$z$};
    \node at (4, 3.5) {$w$};
     
    \node at (1, -0.5) {$z_{\infty}$};
    \node at (7, -0.5) {$w_{\infty}$};

\end{tikzpicture}
   \caption{The hyperbolic distance between two points $z$ and $w$ with $\Re (z) \neq \Re(w) $ and $\Re (z) = \Re(w) $ }
   \label{fig2}
\end{figure}
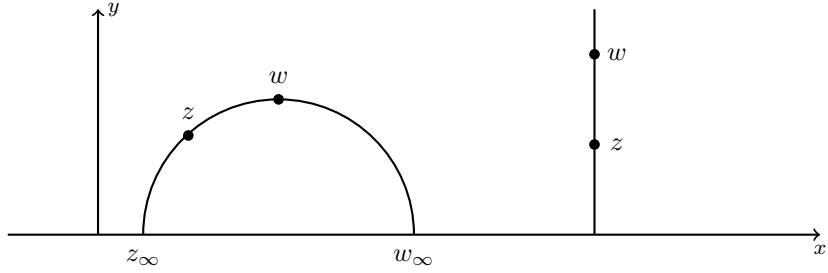

The hyperbolic distance is defined in terms of the cross-ratio or Euclidean distances 
\[ d_H(z,w)=\log[z,w,w_{\infty},z_{\infty}]=\log\left(\frac{z-w_{\infty}}{w-w_{\infty}}\frac{w-z_{\infty}}{z-z_{\infty}}\right)=\ln\left(\frac{|z-w_{\infty}|}{|w-w_{\infty}|}\frac{|w-z_{\infty}|}{|z-z_{\infty}|}\right).\]
Notice that for $x=u$ and $y<v$, the geodesic is the vertical ray $C_x$. In this case \break $z_{\infty}=(x,0), w_{\infty}=\infty$ and 
\[ d_H(z,w)=\ln \left(\frac{v}{y}\right).\] 
For $A(a,0)$ and $z=x+{\bf i}y\in \H_2$, define \[ d_H(A,z):=\ln\left(\frac{(x-a)^2+y^2}{y}\right).\]
\begin{figure}[hbp] 
   \centering
\begin{tikzpicture}[/.style={blue!30,very thin},scale=0.6]
    \draw [help lines] (-2, 0)  (9, 5);
    \draw[->, thick] (-2, 0) -- (9, 0) node[below]{\footnotesize $x$};
    \draw[->, thick] (0, 0) -- (0, 5) node[right]{\footnotesize $y$};

    \draw[-, , thick] (1, 0) -- (4, 4);
    \draw[-, , thick] (4, 0) -- (4, 4);

     \foreach \x/\y in {   1/0, 4/4, 4/0}
    \filldraw[] (\x, \y) circle(3pt);

    \node at (1, -0.5) {$A$};
        \node at (4.2, 4.3) {$z$};
\end{tikzpicture}

   \caption{The distance between $z\in \H_2$ and a boundary point $A$.}
   \label{fig3}
\end{figure}
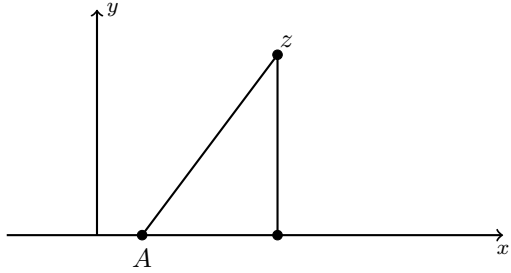

An additive property of this distance is claimed and used in \cite{SC}. To make the paper self-contained and for the benefit of the reader, we state and prove it below.

\begin{prop} 
Let $A$ be one of the ideal points of a geodesic that passes through $z=x+y{\bf i},w=u+v{\bf i}\in \H_2$. Then $d_H(z,w)=|d_H(A,z)-d_H(A,w)|$. 
\end{prop}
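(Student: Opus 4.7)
The plan is to verify the identity by direct computation, splitting according to the type of geodesic through $z$ and $w$: a vertical ray or a Euclidean semicircle perpendicular to the real axis. In the vertical case $\re z = \re w$, and $A = (\re z, 0)$ is the unique finite ideal point on the geodesic. The definition of $d_H(A,\cdot)$ stated just before the proposition evaluates to $d_H(A, z) = \ln y$ and $d_H(A, w) = \ln v$, while $d_H(z, w) = \ln(v/y)$ by the vertical-ray formula, so the identity reduces to $|\ln v - \ln y| = \ln(v/y)$.

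For the semicircular case, let $a < b$ be the two real endpoints of the geodesic and parametrize the semicircle by $z(\theta) = \tfrac{a+b}{2} + r e^{i\theta}$ with $r = (b-a)/2$ and $\theta \in (0, \pi)$. Writing $z = z(\theta)$ and $w = z(\phi)$, the half-angle identities give
\[ |z - a| = 2r\cos(\theta/2), \quad |z - b| = 2r\sin(\theta/2), \quad \im z = 2r\sin(\theta/2)\cos(\theta/2), \]
and the analogous expressions for $w$. Substituting these into the cross-ratio definition of $d_H(z, w)$ (taking $z_\infty$ to be the ideal point closer to $z$) collapses the right-hand side to $\ln\bigl|\tan(\theta/2)/\tan(\phi/2)\bigr|$. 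Substituting the same expressions into the definition of $d_H(A, \cdot)$ gives, when $A = a$, $d_H(A, z) - d_H(A, w) = \ln\bigl(\cot(\theta/2)/\cot(\phi/2)\bigr)$, which differs from $d_H(z, w)$ only in sign; the $A = b$ case is symmetric. Taking absolute values matches both sides.

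The main (modest) obstacle is sign bookkeeping---keeping track of which ideal point is closer to $z$ versus to $w$ so that the cross-ratio defining $d_H(z, w)$ is applied in the correct order---but once the trigonometric parametrization is in place the rest is a mechanical half-angle calculation.
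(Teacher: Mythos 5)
Your proof is correct and follows the same basic strategy as the paper's: split into the vertical-ray case and the semicircle case, then verify the identity directly from the stated definitions of $d_H(z,w)$ and $d_H(A,\cdot)$. The only difference is cosmetic --- you parametrize the semicircle by angle and use half-angle identities to reduce both sides to $\bigl|\ln\bigl(\tan(\theta/2)/\tan(\phi/2)\bigr)\bigr|$, whereas the paper works in Cartesian coordinates with the circle relations $x^2+y^2=2rx$, etc., and reduces both sides to $\bigl|\ln\bigl(xv/(yu)\bigr)\bigr|$; these are the same computation in different coordinates.
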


\begin{proof}
Assume first that $x\neq u$, i.e. the geodesic through $z$ and $w$ is a semicircle. Without loss of generality, assume that $A(0,0)$. 

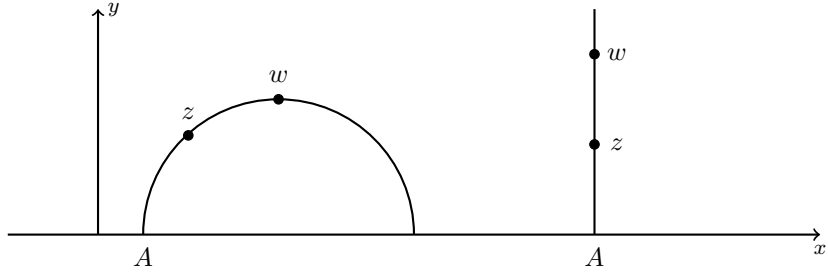
\begin{figure}[htbp] 
   \centering
\begin{tikzpicture}[/.style={blue!30,very thin},scale=0.6]
    \draw [help lines] (-2, 0)  (16, 5);
    \draw[->, thick] (-2, 0) -- (16, 0) node[below]{\footnotesize $x$};
    \draw[->, thick] (0, 0) -- (0, 5) node[right]{\footnotesize $y$};
    
\draw[,  thick ] (7, 0) arc (0:180:3cm);

    \draw[-, , thick] (11, 0) -- (11, 5);

    \node at (1, -0.5) {$A$};

            \node at (11, -0.5) {$A$};
            
    \node at (2, 2.7) {$z$};
    \node at (4, 3.5) {$w$};
     
    \node at (11.5, 2) {$z$};
    \node at (11.5, 4) {$w$};

     \foreach \x/\y in {   4/3, 2/2.2, 11/2, 11/4}
    \filldraw[] (\x, \y) circle(3pt);
        
\end{tikzpicture}
   \caption{The additive property of the boundary distance}
   \label{fig4}
\end{figure}

Let $(x-r)^2+y^2=r^2$ be the equation of the geodesic and $B(2r,0)$ the other ideal point. If $z(x,y),w(u,v)$, then $x^2+y^2=2rx,~u^2+v^2=2ru,~v^2=u(2r-u),~y^2=x(2r-x)$.  Now 
\[
\begin{split}
|d_H(A,z)-d_H(A,w)|   &   =\left|\ln\left(\frac{x^2+y^2}{y}\right)-\ln\left(\frac{u^2+v^2}{v}\right)\right | \\
& =\left|\ln\left(\frac{2rx}{y}\right)-\ln\left(\frac{2ru}{v}\right)\right |=\left |\ln\frac{xv}{yu}\right |
\end{split}
\] 
On the other hand, 
\[
\begin{split}
d_H(z,w) & =\left|\ln\left(\frac{|z||w-B|}{|w||z-B|}\right)\right|=\left|\ln\left(\frac{\sqrt{x^2+y^2}\sqrt{(2r-u)^2+v^2}}{\sqrt{(2r-x)^2+y^2}\sqrt{u^2+v^2}}\right)\right|\\
& =\left|\ln\left(\frac{\sqrt{2rx}\sqrt{4r^2-2ru}}{\sqrt{4r^2-2rx}\sqrt{2ru}}\right)\right|=\left |\ln\sqrt{\frac{x(2r-u)}{u(2r-x)}}\right|=\left |\ln\sqrt{\frac{x^2v^2}{y^2u^2}}\right|=\left |\ln \frac{xv}{yu}\right|.
\end{split}
\] 
 When $x=u$ the geodesic through $z,w$ is the ray $C_x$ with an ideal point at $A(x,0)$. 
Then, $d_H(A,z)=\ln y$ and $d_H(A,w)=\ln v$.  Hence, 
\[ d_H(z,w)=\left | \ln\frac{v}{y}\right |=|\ln v-\ln y|=|d_H(A,w)-d_H(A,z)|.\]
This completes the proof. 
\end{proof}

The group $\Sl_2(\R)$ acts on the right on $\H_2$: if $M\in \Sl_2(\R)$ and $M^{-1}=\begin{pmatrix}  a & b \\ c & d  \end{pmatrix}$ then $$\displaystyle{z\cdot M:=M^{-1}z=\frac{az+b}{cz+d}}$$

\subsection{The upper half-plane $\H_2$ as a parameter space for positive definite quadratics.} Let $$Q(X,Z)=aX^2-2bXZ+cZ^2$$ be a binary quadratic form with real coefficients and homogeneous variables $[X,Z] \in \P^1\R$. Let $\Delta=ac-b^2$ be its discriminant. Then $$Q(X,Z)=a[X-(b/a)Z]^2+(\Delta/a)Z^2.$$ For both $\Delta>0$ and  $a>0$, $Q(X,Z)$ is always positive (note that $(X,Z)\neq (0,0)$ since $[X,Z] \in \R\P^1$). Such a quadratic form $Q$ is called \textit{positive definite}.  It has two complex roots $[\omega,1],[\bar{\omega},1]$ where $\omega=b/a+(\sqrt {\Delta}/a) {\bf i}\in \H_2$. Let $V^+_{2,\R}$ be the space of positive definite real quadratic forms. To each $Q(X,Z)\in V^+_{2,\R}$, we associate the complex number $\omega$ in $\H_2$.  

\begin{defn}The map $$\z:V^+_{2,\R}\rightarrow \H_2$$ which sends a positive definite quadratic to its root in $\H_2$ is called  \underline{the zero map}. 
\end{defn}

The hyperbolic plane $\H_2$ is a parameter space for positive definite quadratic forms (up to a constant factor) via the inverse 
\[ \z^{-1}(\omega)=Q_{\omega}:=(X-\omega Z)(X-\bar{\omega}Z).\]
 The group $\Sl_2(\R)$ acts on $V^+_{2,\R}$ via the linear change of variables: for a matrix $M= \begin{pmatrix}  a & b \\ c & d  \end{pmatrix}$, $$(M\cdot Q)(X,Z)=Q^M(X,Z):=Q(aX+bZ,cX+dZ).$$ Note that the $\Sl_2(\R)$ action does not change the discriminant. One can easily verify the following 

\begin{prop}
The zero map $\z: V^+_{2,\R}\rightarrow \H_2$ is $\Sl_2(\R)$-equivariant, i.e. $$\z(M\cdot Q)=M^{-1}\z(Q).$$
\end{prop}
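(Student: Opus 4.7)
My approach is to compute $\z(M \cdot Q)$ directly from the factorization of $Q$ provided by $\z^{-1}$ and then identify the result with $M^{-1}\z(Q)$ under the M\"obius convention recalled in the paper.

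First, since $\z$ is invariant under positive scaling, I would assume without loss of generality that $Q(X,Z) = (X - \omega Z)(X - \bar\omega Z)$, where $\omega = \z(Q) \in \H_2$. For $M = \chr{a}{b}{c}{d} \in \Sl_2(\R)$, the definition of the action yields
\[
(M \cdot Q)(X,Z) = \bigl((a-\omega c)X + (b-\omega d)Z\bigr)\bigl((a-\bar\omega c)X + (b-\bar\omega d)Z\bigr).
\]
Because the entries of $M$ are real, the second factor is the complex conjugate of the first, so $M \cdot Q$ remains a real quadratic with a pair of conjugate roots. Its leading coefficient is $|a - \omega c|^2 > 0$ (the factor $a - \omega c$ cannot vanish since $\omega$ has positive imaginary part and $a, c$ are real), so $M \cdot Q$ is positive definite and $\z(M \cdot Q)$ is well defined; this is also consistent with the preceding remark that $\Sl_2(\R)$ preserves the discriminant.

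Next I would extract the root in $\H_2$ from the first linear factor:
\[
\z(M \cdot Q) = -\frac{b - \omega d}{a - \omega c} = \frac{d\omega - b}{-c\omega + a}.
\]
A short check gives $\im\bigl(\z(M\cdot Q)\bigr) = \im(\omega)/|a - c\omega|^2 > 0$, confirming that this root is indeed the one lying in $\H_2$.

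Finally, I would compare with $M^{-1}\omega$. Since $ad - bc = 1$, one has $M^{-1} = \chr{d}{-b}{-c}{a}$, and by the M\"obius convention $z \cdot M = M^{-1} z$ recalled earlier,
\[
M^{-1}\omega = \frac{d\omega + (-b)}{(-c)\omega + a} = \frac{d\omega - b}{-c\omega + a},
\]
which matches $\z(M \cdot Q)$ exactly. The whole argument is a direct substitution; the only point that requires care is bookkeeping of the $M$ versus $M^{-1}$ convention, so that the inverse appears on the right-hand side of $\z(M\cdot Q) = M^{-1}\z(Q)$ rather than $M$ itself.
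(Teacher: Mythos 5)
Your proof is correct and complete: the paper itself offers no argument for this proposition (it is prefaced only by ``One can easily verify the following''), and your direct computation on the factored form $(X-\omega Z)(X-\bar\omega Z)$, together with the checks that the leading coefficient $|a-\omega c|^2$ is positive and that $\im\bigl(\z(M\cdot Q)\bigr)=\im(\omega)/|a-c\omega|^2>0$, is exactly the intended verification. The careful bookkeeping of the $M$ versus $M^{-1}$ convention is the only subtle point, and you handle it correctly.
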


\noindent When $\Delta=0$, the quadratic form $Q(X,Z)=a[X-(b/a)Z]^2$ has a real, double root $[b/a,1]$. If $a$ is a real number, we let $Q_a=(X-aZ)^2$ be the quadratic with a double root at $[a,1]$. We also let $Q_{\infty}=Z^2$ be the quadratic form with a double root at $\infty$. It has thus been established that the boundary $\R\P^1=\R \cup \infty$ of $\H_2$ parametrizes quadratic forms (up to a constant factor) with discriminant $\Delta=0$.

To recap: the hyperbolic plane $\H_2$ parametrizes binary quadratic forms with discriminant $\Delta>0$ and $a>0$, while its boundary parametrizes those with discriminant $\Delta=0$.

It has been claimed and used in \cite{julia} and \cite{SC} that this parametrization is not just a bijection between sets; the hyperbolic geometry of $\H_2$ represents faithfully the algebra of quadratic forms. This was probably known even before. In any case, here is the appropriate statement and a proof of it.
\begin{prop}
Let $\overline{\H}_2=\H_2\cup \partial \H_2=\H_2\cup\R\P^1$ and $\omega_1,\omega_2\in \overline{\H}_2$. The quadratics of the form $$s Q_{\omega_1}+t Q_{\omega_2},s\geq 0,t\geq 0, s+t=1$$ parametrize the hyperbolic segment that joins $\omega_1$ and $\omega_2$. 
\end{prop}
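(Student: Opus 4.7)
The plan is to exploit the $\Sl_2(\R)$-equivariance of the zero map (the preceding proposition) to reduce the claim to a canonical configuration on the imaginary axis, where a direct computation finishes the argument.

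First I would observe that $\Sl_2(\R)$ acts transitively on the geodesics of $\H_2$ and on their pairs of endpoints on $\partial \H_2$. Hence there exists $M \in \Sl_2(\R)$ carrying the geodesic through $\omega_1$ and $\omega_2$ to the imaginary axis, with $M^{-1}\omega_1 = \mathbf{i}$ and $M^{-1}\omega_2 = r\mathbf{i}$ for some $r \in (0,\infty)$ (or one of the normalized boundary values $0,\infty$, in the degenerate cases). Next I would check that this reduction preserves the image set of the simplex. Since $M \cdot Q_\omega$ and $Q_{M^{-1}\omega}$ both vanish on the same conjugate pair of lines and are homogeneous of the same degree, we have $M \cdot Q_\omega = c_\omega\, Q_{M^{-1}\omega}$ for some positive scalar $c_\omega$, and therefore
\[
M \cdot (sQ_{\omega_1} + tQ_{\omega_2}) \;=\; sc_1\, Q_{M^{-1}\omega_1} + tc_2\, Q_{M^{-1}\omega_2}.
\]
After normalizing the coefficients to sum to one this is a convex combination of $Q_{M^{-1}\omega_1}$ and $Q_{M^{-1}\omega_2}$, and the reparametrization $(s,t)\mapsto (sc_1,tc_2)/(sc_1+tc_2)$ is a homeomorphism of the simplex onto itself (since $c_1,c_2 > 0$). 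Because $M$ is a hyperbolic isometry, it also sends the hyperbolic segment joining $\omega_1$ to $\omega_2$ to the hyperbolic segment joining $\mathbf{i}$ to $r\mathbf{i}$, so it is enough to prove the statement in the normalized case.

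In the normalized case, a direct computation yields $Q_{\mathbf{i}} = X^2 + Z^2$, $Q_{r\mathbf{i}} = X^2 + r^2 Z^2$, and
\[
sQ_{\mathbf{i}} + tQ_{r\mathbf{i}} \;=\; X^2 + (s + tr^2)Z^2,
\]
whose root in $\H_2$ is $\mathbf{i}\sqrt{s + tr^2} = \mathbf{i}\sqrt{1 + t(r^2-1)}$. As a function of $t \in [0,1]$ this is continuous and monotone, taking the values $\mathbf{i}$ at $t=0$ and $r\mathbf{i}$ at $t=1$, so its image is exactly the vertical segment joining $\mathbf{i}$ to $r\mathbf{i}$, which is the hyperbolic geodesic segment between these two points. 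The boundary cases (for instance $\omega_2 = \infty$, where $Q_\infty = Z^2$ and the zero becomes $\mathbf{i}/\sqrt{s}$) are handled identically; the case $\omega_1 = \omega_2$ is trivial.

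The step I expect to be the main obstacle is not the final computation but the bookkeeping of the positive rescaling constants $c_\omega$ produced by the $\Sl_2(\R)$-action: one must argue cleanly that these factors only reparametrize the simplex without altering its image in $\overline{\H}_2$, so that the reduction via equivariance is genuinely lossless.
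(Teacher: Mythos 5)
Your proof is correct, but it takes a genuinely different route from the paper's. The paper works intrinsically on the geodesic $C_{a,b}$ through $\omega_1,\omega_2$: it first computes that the root of $\lambda Q_a+\mu Q_b$ (where $a,b$ are the ideal points) is $(\lambda a+\mu b)+\mathbf{i}(b-a)\sqrt{\lambda\mu}$, hence lies on the semicircle of radius $(b-a)/2$, then writes $Q_{\omega_1}$ and $Q_{\omega_2}$ as convex combinations of $Q_a$ and $Q_b$ so that $sQ_{\omega_1}+tQ_{\omega_2}$ is again such a combination; the fact that the image is the \emph{segment} between $\omega_1$ and $\omega_2$ (rather than just a subset of the geodesic) is dismissed as obvious. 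You instead use $\Sl_2(\R)$-equivariance to normalize to the imaginary axis, where the computation is a one-liner, and you are careful about the one real subtlety in that reduction, namely that $M\cdot Q_\omega=|a-\omega c|^2\,Q_{M^{-1}\omega}$ is only a positive multiple of $Q_{M^{-1}\omega}$, so the simplex gets reparametrized by $(s,t)\mapsto(sc_1,tc_2)/(sc_1+tc_2)$; since $c_1,c_2>0$ this is a self-homeomorphism of the simplex and the image set is unchanged, so the reduction is indeed lossless. Your approach buys a uniform treatment of the semicircle and vertical-ray cases (the paper only does the former and asserts the latter is similar) and an explicit monotonicity argument showing the image is \emph{exactly} the segment, which the paper glosses over; the paper's barycentric decomposition in the basis $\{Q_a,Q_b\}$, on the other hand, is precisely the mechanism that gets iterated in the inductive proof of the convex-hull generalization (Proposition~\ref{convex hull}), so it earns its keep later. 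Both arguments are sound.
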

\begin{proof}
We will show only the case when the hyperbolic segment is part of a semicircle. The vertical geodesic is similar. Let $a<b$ be two real numbers such that $A(a,0), B(b,0)$ are the ideal points of the geodesic $C_{a,b}$ that passes through $\omega_1,\omega_2$. We first show that $C_{a,b}$ parametrizes quadratics of the form $$\lambda Q_a+\mu Q_b,\lambda \geq 0,\mu \geq 0, \lambda+\mu=1,$$ i.e. $\z(\lambda Q_a+\mu Q_b)\in C_{a,b}$. The center of $C_{a,b}$ is on the real axis at $\displaystyle{\frac{a+b}2}$ and its radius is $\displaystyle{\frac{b-a}2}$. Let $\lambda \geq 0,\mu \geq 0, \lambda+\mu=1$. Then $$\lambda Q_a+\mu Q_b=\lambda (x-a)^2+\mu (x-b)^2=x^2-2(\lambda a+\mu b)x+\lambda a^2+\mu b^2.$$ The root of $\lambda Q_a+\mu Q_b$ in $\H_2$ is $$(\lambda a+\mu b)+{\bf i}(b-a)\sqrt{\lambda\mu},$$ and its distance from $((a+b)/2,0)$ is easily computed to be $(b-a)/2$.

The proposition now follows easily. Let \[Q_{\omega_1}=\lambda_1Q_a+\mu_1Q_b~ \text{and}~Q_{\omega_2}=\lambda_2Q_a+\mu_2Q_b~\text{with}~\lambda_i+\mu_i=1,~\text{for}~i=1,2.\] Then, for $s\geq 0,t \geq 0,s+t=1$ we have \[s Q_{\omega_1}+t Q_{\omega_2}=(s\lambda_1+t\lambda_2)Q_a+(s\mu_1+t\mu_2)Q_b,~\text{with}~(s\lambda_1+t\lambda_2)+(s\mu_1+t\mu_2)=1,\] hence $\z(s Q_{\omega_1}+t Q_{\omega_2})\in C_{a,b}$. It is obvious that  $\z(s Q_{\omega_1}+t Q_{\omega_2})$ lives in fact in the hyperbolic segment that joins $\omega_1$ and $\omega_2$.
\end{proof}

This proposition can be generalized by induction as follows. 

\begin{prop}\label{convex hull}
Let $\omega_1,\omega_2,...,\omega_n\in \overline{\H}_2$ such that for all $i$, $\omega_i$ is not in the hyperbolic convex hull of $\omega_1,\omega_2,...,\omega_{i-1}$. Then the convex hull of  $\omega_1,\omega_2, \dots ,\omega_n$ parametrizes the linear combinations $\sum_{i=1}^n\lambda_iQ_{\omega_i}$ with $\lambda_i\geq 0$ and $\sum_{i=1}^n\lambda_i=1$.
\end{prop}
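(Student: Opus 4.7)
The plan is to argue by induction on $n$, using the preceding proposition as the base case $n=2$. The hypothesis that no $\omega_i$ lies in the hyperbolic convex hull of its predecessors is inherited by the sub-collection $\omega_1,\dots,\omega_{n-1}$, so the inductive hypothesis applies to it. Write $C_k$ for the hyperbolic convex hull of $\omega_1,\dots,\omega_k$.

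For the inductive step, I would first record the geometric decomposition
\[ C_n \;=\; \bigcup_{p'\in C_{n-1}} [\omega_n, p'], \]
where $[\omega_n,p']$ denotes the hyperbolic segment. This is the standard cone description of a convex hull over an added exterior vertex, and it is available here because $\omega_n\notin C_{n-1}$ and $\overline{\H}_2$ is a uniquely geodesic space.

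Now take any $p\in C_n$ and choose $p'\in C_{n-1}$ with $p\in[\omega_n,p']$. The base case produces $s,t\geq 0$ with $s+t=1$ and $p\leftrightarrow sQ_{p'}+tQ_{\omega_n}$, and the inductive hypothesis produces nonnegative weights $\mu_1,\dots,\mu_{n-1}$ summing to $1$ with $p'\leftrightarrow \sum_{i=1}^{n-1}\mu_iQ_{\omega_i}$. Setting $\lambda_i=s\mu_i$ for $i<n$ and $\lambda_n=t$ writes $p$ as $\sum_{i=1}^n\lambda_iQ_{\omega_i}$ with $\sum\lambda_i=s+t=1$. Conversely, given $\lambda_1,\dots,\lambda_n\geq 0$ summing to $1$, set $s=1-\lambda_n$, $t=\lambda_n$. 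If $s=0$, the linear combination is $Q_{\omega_n}$, whose zero is $\omega_n\in C_n$. Otherwise $\mu_i:=\lambda_i/s$ are nonnegative and sum to $1$, so by induction they correspond to a point $p'\in C_{n-1}$; the base case then identifies $sQ_{p'}+tQ_{\omega_n}=\sum_{i=1}^n\lambda_iQ_{\omega_i}$ with a point on $[\omega_n,p']\subset C_n$.

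The step I expect to be most delicate is the cone decomposition of $C_n$: in a general metric space a convex hull need not be recovered as the union of geodesics from an exterior vertex to a lower-dimensional hull. In $\overline{\H}_2$ it does, essentially because hyperbolic convexity here reduces to ordinary convexity under the Beltrami-type identification used earlier to parametrize geodesics by their ideal endpoints, and because the non-degeneracy hypothesis guarantees $C_{n-1}$ is a bona fide proper sub-hull over which to cone. Once that is in place, the algebraic bookkeeping above finishes the proof.
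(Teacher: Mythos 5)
Your proposal is correct and its inductive core is the same as the paper's: group the first $n-1$ terms, renormalize the weights $\lambda_i/\sum_{i<n}\lambda_i$, invoke the inductive hypothesis to replace $\sum_{i<n}\mu_i Q_{\omega_i}$ by a single $Q_{\omega_0}$ with $\omega_0$ in the sub-hull, and finish with the two-point case. The difference is that the paper only carries out this one direction --- every admissible linear combination corresponds to a point of the convex hull --- and leaves the surjectivity onto the hull implicit, whereas you prove both containments. Your extra ingredient is the cone decomposition $C_n=\bigcup_{p'\in C_{n-1}}[\omega_n,p']$, which you rightly flag as the only step needing geometric justification; it does hold here because in the Klein--Beltrami model hyperbolic geodesics are Euclidean chords, so hyperbolic convex hulls in $\overline{\H}_2$ coincide with Euclidean ones and the cone over a convex set from an exterior vertex is again convex (the non-degeneracy hypothesis on the $\omega_i$ guaranteeing $\omega_n\notin C_{n-1}$). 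One small bookkeeping point worth making explicit in your write-up: since each $Q_{\omega_i}$ is monic and the weights sum to $1$, the combination $\sum_i\mu_iQ_{\omega_i}$ is itself monic and hence literally equals $Q_{p'}$ rather than a positive multiple of it, which is what makes the substitution $sQ_{p'}+tQ_{\omega_n}=\sum_i\lambda_iQ_{\omega_i}$ an identity and not just an equality up to scaling (the case of an ideal point at infinity, where $Q_\infty=Z^2$, would need the ``up to constant factor'' convention). Net effect: your argument establishes the statement as literally claimed (a parametrization, i.e.\ a bijection onto the hull), at the cost of one additional convexity lemma that the paper does not supply.
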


\proof
We proceed by induction. For $n=2$ the statement is true due to the previous proposition. Consider $\sum_{i=1}^n\lambda_iQ_{\omega_i}$ with $\lambda_i\geq 0$ and $\sum_{i=1}^n\lambda_i=1$. Then \[\sum_{i=1}^n\lambda_iQ_{\omega_i}=\left(\sum_{i=1}^{n-1}\lambda_i\right )\sum_{i=1}^{n-1}\left(\frac{\lambda_i}{\sum_{i=1}^{n-1}\lambda_i}\right)Q_{\omega_i}+\lambda_nQ_{\omega_n}.\] By induction hypothesis, there exists $\omega_0$ in the convex hull of $\omega_1,\omega_2,...,\omega_{n-1}$ such that \[\sum_{i=1}^{n-1}\left(\frac{\lambda_i}{\sum_{i=1}^{n-1}\lambda_i}\right)Q_{\omega_i}=Q_{\omega_0}\] It follows that \[\sum_{i=1}^n\lambda_iQ_{\omega_i}=\left(\sum_{i=0}^{n-1}\lambda_i\right)Q_{\omega_0}+\lambda_nQ_{\omega_n}\] represents a point $\omega$ in the hyperbolic segment that joins $\omega_0$ and $\omega_n$. Clearly $\omega$ is also in the convex hull of $\alpha_1,\alpha_2,...,\alpha_n$.

\qed

\subsection{The hyperbolic three dimensional space $\H_3$} As a set, $\H_3=\C \times \R^+$. Points of $\H_3$ will be written in the form $z+t{\bf j}$ where $z\in \C$ and $t>0$.  The equation $t=0$ represents the floor $\C$ of $\H_3$. The hyperbolic space $\H_3$  is foliated via horospheres %
\[H_t:=\{z+t{\bf j}:~z\in \C\}\] 
which are centered at $\infty$ and indexed by the height $t$ above $\partial \H_3=\C\P^1$. The algebra of $\H_3$ is not commutative. The following identities are essential to computations: \[{\bf j}^2=-1,~{\bf i}{\bf j}=-{\bf j}{\bf i},~{\bf j}z=\bar{z}{\bf j}~\text{(see the lemma below for a proof of this)}.\] The notion of complex modulus extends to $\H_3$:  $|z+t{\bf j}|=|z|^2+t^2$. There is a natural isometrical inclusion map $\H_2\rightarrow \H_3$ via $x+{\bf i}t\rightarrow x+{\bf j}t$, the upper half-plane $\H_2$ thus, sits as a vertical cross-section inside $\H_3$. The invariant elements of $\H_3$ under the partial conjugation \[z+{\bf j}t \mapsto \bar z+{\bf j}t\] are precisely the elements of $\H_2$. The hyperbolic metric of $\H_3$ is \[ds^2=\frac{|dz|^2+dt^2}{t^2}.\] The geodesics are either semicircles centered on the floor $\C$ and perpendicular to $\C$, or rays $\{z_0+{\bf j}t\}$ perpendicular to $\C$. 

\begin{figure}[htbp] 
   \centering
   \includegraphics[width=3.5in]{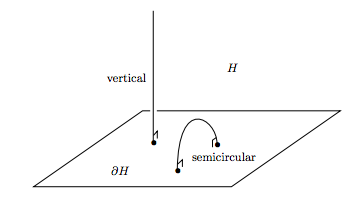} 
   \caption{Geodesics in upper half-space $H$}
   \label{fig5}
\end{figure}

For $\omega=z+t{\bf j}\in \H_3$ and $w+0{\bf j}\in \C$ on the floor, define $$d_H(\omega,w):=\frac{|z-w|^2+t^2}{y}.$$ The following proposition and its proof are straightforward generalizations from $\H_2$.

\begin{prop}
If one of the ideal points of the geodesic through $\omega_1,\omega_2$ is at $w$, then $$d_H(\omega_1,\omega_2)=|d_H(\omega_1,w)-d_H(\omega_2,w)|.$$ 
\end{prop}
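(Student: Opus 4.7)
The plan is to reduce this proposition to its two-dimensional analogue (proved earlier in this section) by exploiting the fact that every geodesic of $\H_3$ lies inside an isometrically embedded copy of $\H_2$.

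First I would dispose of the case in which the geodesic through $\omega_1$ and $\omega_2$ is a vertical ray $\{z_0 + s{\bf j} : s > 0\}$. Here $w = z_0$ is the only finite ideal point, and writing $\omega_i = z_0 + t_i {\bf j}$ a direct substitution shows that both sides of the claimed identity equal $|\ln(t_2/t_1)|$.

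For the semicircular case, I would normalize using isometries of $\H_3$ that act as Euclidean rigid motions on the floor $\C$. The translation $z + t{\bf j} \mapsto (z - w) + t{\bf j}$ and the rotation $z + t{\bf j} \mapsto e^{i\theta} z + t{\bf j}$ both preserve the metric $ds^2 = (|dz|^2 + dt^2)/t^2$, since Euclidean isometries of $\C$ preserve $|dz|$, and both preserve the boundary distance function $d_H(\omega, \cdot)$, which depends on its boundary argument only through $|z - w|$. Using these, one may assume $w = 0$ and that the other ideal point of the geodesic lies on the positive real axis; the geodesic then lies entirely inside the vertical half-plane $P := \{x + t{\bf j} : x \in \R,\ t > 0\}$.

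The map $\phi : \H_2 \to P$ given by $x + {\bf i}y \mapsto x + y{\bf j}$ pulls the $\H_3$-metric back to the $\H_2$-metric $(dx^2 + dy^2)/y^2$ and carries $A(a,0) \in \partial \H_2$ to $a + 0{\bf j} \in \partial \H_3$, matching the two boundary-distance functions. Under $\phi$ the statement in $\H_3$ transports word-for-word to the proposition in $\H_2$ proved above. The main point requiring care is verifying invariance of $d_H(\omega, w)$ under the normalizing isometries, which is immediate from the explicit formula; no substantial obstacle arises beyond a careful bookkeeping of which transformations preserve which structures.
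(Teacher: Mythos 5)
Your proof is correct. Note, though, that the paper does not actually supply an argument for this proposition: it simply declares the statement and its proof to be ``straightforward generalizations from $\H_2$,'' the intended meaning presumably being that one repeats the earlier two-dimensional computation (parametrizing the semicircular geodesic by $(x-r)^2+y^2=r^2$ and manipulating logarithms) with $|z|$ in place of $\sqrt{x^2+y^2}$. Your route is genuinely different and, in my view, cleaner: rather than redoing the computation, you normalize by floor translations and rotations --- which are isometries of $\H_3$ fixing the height coordinate and hence preserving both the hyperbolic metric and the boundary-distance function $\ln\bigl((|z-w|^2+t^2)/t\bigr)$ --- so that the geodesic lies in the vertical half-plane $P=\{x+t{\bf j}\}$, and then you invoke the isometric embedding $x+{\bf i}y\mapsto x+y{\bf j}$ (which the paper itself records) to transport the already-proved $\H_2$ statement. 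The one point worth making explicit is that $P$ is totally geodesic in $\H_3$, i.e.\ the $\H_3$-geodesic through two points of $P$ with ideal points on $\R$ is contained in $P$, so that distances measured in $\H_3$ agree with those measured in the embedded copy of $\H_2$; you assert this implicitly and it is immediate from the classification of geodesics given in the paper. What your approach buys is a proof with no new computation and a clear explanation of \emph{why} the generalization is ``straightforward''; what the direct computational route buys is independence from the normalization step and from the (easy but necessary) verification that the chosen isometries preserve the boundary distance.
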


There is a right action of $\Sl_2(\C)$ on $\H_3$. If $M\in \Sl_2(\C)$ and $M^{-1}= \begin{pmatrix}  a & b \\ c & d  \end{pmatrix}$, its action is described as follows \[(z+{\bf j}t)\cdot M=M^{-1}(z+{\bf j}t)=[a(z+{\bf j}t)+b][(c(z+{\bf j}t)+d]^{-1},\] where the inverse indicates the right inverse in the non commutative structure of $\H_3$.  Note that for $t=0$ we get the standard $\Sl_2(\C)$-action on the boundary $\C\P^1$ of $\H_3$.

\begin{lem} The action of $\Sl_2(\C)$ on $\H_3$ can be written in the form
\[(z+{\bf j}t)\cdot M=\frac{(az+b)\overline{(cz+d)}+a\bar ct^2+{\bf j}t}{|cz+d|^2+|c|^2t^2}.\]
\end{lem}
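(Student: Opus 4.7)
The plan is to unwind the definition of the action and then carry out the non-commutative algebra carefully, using the identity ${\bf j}w=\bar w{\bf j}$ for every $w\in\C$ to move ${\bf j}$'s past complex scalars. First I would set $p=az+b$ and $q=cz+d$ and observe that, because $t\in\R$ commutes with everything, $a(z+{\bf j}t)+b=p+a{\bf j}t$ and $c(z+{\bf j}t)+d=q+c{\bf j}t$. So the task reduces to computing $(p+a{\bf j}t)(q+c{\bf j}t)^{-1}$.

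Next I would compute the inverse of the denominator by ``rationalizing'' it. The natural candidate is $\bar q-c{\bf j}t$. Using ${\bf j}c=\bar c{\bf j}$ and ${\bf j}\bar q=q{\bf j}$, a direct expansion gives
\[
(q+c{\bf j}t)(\bar q-c{\bf j}t)=q\bar q-qc{\bf j}t+c{\bf j}\bar q t-c{\bf j}c{\bf j}t^{2}=|q|^{2}-qc{\bf j}t+cq{\bf j}t+|c|^{2}t^{2},
\]
and the two cross terms cancel by the commutativity of complex multiplication. Hence $(q+c{\bf j}t)^{-1}=(\bar q-c{\bf j}t)/(|q|^{2}+|c|^{2}t^{2})$, and since the denominator is a real number it may be pulled out freely.

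Now I would multiply out the numerator $(p+a{\bf j}t)(\bar q-c{\bf j}t)$ by the same technique:
\[
(p+a{\bf j}t)(\bar q-c{\bf j}t)=p\bar q-pc{\bf j}t+a{\bf j}\bar qt-a{\bf j}c{\bf j}t^{2}=p\bar q+a\bar ct^{2}+(aq-pc){\bf j}t,
\]
again using ${\bf j}\bar q=q{\bf j}$ and ${\bf j}c=\bar c{\bf j}$ (together with ${\bf j}^{2}=-1$). The final key observation is that the coefficient of ${\bf j}t$ simplifies beautifully: $aq-pc=a(cz+d)-(az+b)c=ad-bc$, which equals $1$ because $M^{-1}\in\Sl_{2}(\C)$. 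Substituting and dividing by $|q|^{2}+|c|^{2}t^{2}$ yields the claimed formula.

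The only real obstacle is bookkeeping in the non-commutative algebra of $\H_3$: one must consistently move ${\bf j}$'s to one side using ${\bf j}w=\bar w{\bf j}$ and choose the correct rationalizing factor $\bar q-c{\bf j}t$ (rather than, say, $\bar q+c{\bf j}t$) so that the two off-diagonal cross terms cancel. Once this is done, the determinant condition $ad-bc=1$ does the rest of the work, and nothing else is needed beyond the identities already recorded in the paper.
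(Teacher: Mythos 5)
Your proposal is correct and follows essentially the same route as the paper: both compute the right inverse of $c(z+{\bf j}t)+d$ by multiplying with the same rationalizing factor (your $\bar q-c{\bf j}t$ is exactly the paper's $(\bar z-t{\bf j})\bar c+\bar d$), using ${\bf j}w=\bar w{\bf j}$ and ${\bf j}^2=-1$ throughout, and then expand the numerator. The only difference is that you make explicit the use of the determinant condition $ad-bc=1$ to reduce the coefficient of ${\bf j}t$ to $1$, a step the paper leaves implicit in its ``straightforward calculation.''
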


\begin{proof}
First, with $z=x+y{\bf i}$ we have \[{\bf j}\bar z={\bf j}(x-y{\bf i})=x{\bf j}-y{\bf j}{\bf i}=x{\bf j}+y{\bf i}{\bf j}=(x+y{\bf i}){\bf j}=z{\bf j}.\] Using this identity, it is straightforward to show that \[\left[c(z+t{\bf j})+d\right]\left[(\bar z-t{\bf j})\bar c+\bar d\right]=|cz+d|^2+t^2|c|^2.\] Real numbers commute with both ${\bf i}$ and ${\bf j}$ in $\H_3$, hence they have a well-defined inverse. We obtain the right inverse as follows:  \[\left[c(z+t{\bf j})+d\right]^{-1}=\frac{(\bar z-t{\bf j})c+\bar d}{|cz+d|^2+t^2|c|^2}.\] The lemma follows from the straightforward calculation \[[a(z+{\bf j}t)+b][(\bar z-t{\bf j})+\bar d]=(az+b)(\overline{cz+d})+a\bar c t^2+t{\bf j}.\]

\end{proof}

\subsection{The upper half-space $\H_3$ as a parameter space for positive definite Hermitian quadratics.} Let $$H(X,Z)=a|X|^2-bX\Bar Z-\bar b\bar XZ+c|Z|^2, a,c\in \R$$ be a Hermitian quadratic form with homogeneous variables $[X,Z] \in \P^1\C$. Notice that the values of $H(X,Z)$ are always real. Let $\Delta=ac-|b|^2$ be its discriminant. Then $$H(X,Z)=a[X-(\bar b/a)Z]^2+(\Delta/a)Z^2,$$ hence $H(X,Z)>0$ for all $(X,Z)$ when $\Delta >0,a>0$. Such a form is called \textit{positive definite.}  Denote the set of all positive definite Hermitian forms by $V^+_{2,\C}$. There is an $\Sl_2(\C)$ action on $V^+_{2,\C}$ similar to the real case. The natural $\Sl_2(\R)$ equivariant inclusion $\psi: V^+_{2,\R}\rightarrow V^+_{2,\C}$ via $$\psi(aX^2-2bXZ+cZ^2)=a|X|^2-bX\bar Z-\bar b\bar XZ+c|Z|^2,$$ gives rise to an extension of the zero map.

\begin{defn}
The zero map $\z: V^+_{2,\C}\rightarrow \H_3$ is defined via
\begin{equation} \label{eq: hermitian zero map}
\z(a|X|^2-bX\bar Z-\bar b\bar XZ+c|Z|^2)=\frac{\bar b}{a}+{\bf j}\frac{\sqrt \Delta}{a}
\end{equation}
\end{defn}

\begin{prop} \label{complex invariant}
The map $\z$ is $\Sl_2(\C)$ equivariant.
\end{prop}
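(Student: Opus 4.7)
The plan is a direct verification using the explicit formula for the action of $\Sl_2(\C)$ on $\H_3$ established in the preceding lemma. Write $M = \begin{pmatrix} \alpha & \beta \\ \gamma & \delta \end{pmatrix} \in \Sl_2(\C)$ and introduce the shorthands $\omega = \bar b/a$, $t = \sqrt{\Delta}/a$, so that $\z(H) = \omega + {\bf j}t$. I would compute the new Hermitian coefficients $a', b', c'$ of $M \cdot H$ by direct substitution $H(\alpha X + \beta Z, \gamma X + \delta Z)$, apply the zero map to obtain $\z(M \cdot H)$, and compare with the expression for $M^{-1}(\omega + {\bf j}t)$ furnished by the lemma.

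Expanding the substitution and collecting the coefficients of $|X|^2$, $X\bar Z$, $|Z|^2$ is routine. Since $\det M = 1$, the discriminant is invariant ($\Delta' = |\det M|^2 \Delta = \Delta$), so $\z(M \cdot H) = \bar{b'}/a' + {\bf j}\sqrt{\Delta}/a'$. The pivotal step is to use the substitutions $\bar b = a\omega$ and $c = a(|\omega|^2 + t^2)$ to rewrite the new leading coefficient and the conjugate of the new cross coefficient in the shapes
\[
\frac{a'}{a} = |\alpha - \omega\gamma|^2 + t^2|\gamma|^2, \qquad \frac{\bar{b'}}{a} = (\omega\delta - \beta)\overline{(\alpha - \omega\gamma)} - t^2 \bar\gamma\delta.
\]
These are exactly the denominator and the complex part of the numerator appearing in the lemma's formula for $M^{-1}(\omega + {\bf j}t)$, taken with $M^{-1} = \begin{pmatrix} \delta & -\beta \\ -\gamma & \alpha \end{pmatrix}$. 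Dividing through matches the ${\bf j}$-parts as well, since $t/(a'/a) = \sqrt{\Delta}/a'$, and yields the required identity $\z(M \cdot H) = \z(H)\cdot M$.

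The principal obstacle is bookkeeping: the Hermitian substitution generates a dense thicket of conjugates, and one must spot the quadratic-form pattern $|\alpha - \omega\gamma|^2$ in order for the comparison to collapse. A cleaner alternative is to reduce to generators of $\Sl_2(\C)$, namely the translations $T_u = \begin{pmatrix} 1 & u \\ 0 & 1 \end{pmatrix}$, the diagonal scalings $D_\lambda = \begin{pmatrix} \lambda & 0 \\ 0 & \lambda^{-1} \end{pmatrix}$, and the inversion $S = \begin{pmatrix} 0 & -1 \\ 1 & 0 \end{pmatrix}$; since both actions are right group actions, checking equivariance on these suffices. The $T_u$ and $D_\lambda$ cases are immediate, and the $S$ case reduces to the identity ${\bf j}z = \bar z{\bf j}$ already proved in the preceding lemma.
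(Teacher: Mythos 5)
Your proposal is correct, and your primary route differs from the paper's. The paper's proof is a two-line sketch: it reduces to a set of generators of $\Sl_2(\C)$ (stated with a typo --- the singular matrix $\begin{pmatrix} 0 & a \\ 0 & 1 \end{pmatrix}$ is surely meant to be the translation $\begin{pmatrix} 1 & a \\ 0 & 1 \end{pmatrix}$) and declares the verification for each generator ``easy to show.'' Your secondary suggestion (check $T_u$, $D_\lambda$, $S$) is essentially that argument, fleshed out. Your main argument, however, is a direct verification for an arbitrary $M$, and I checked the two pivotal identities: with $\bar b = a\omega$ and $c = a(|\omega|^2+t^2)$ one does get
\[
\frac{a'}{a} = |\alpha-\omega\gamma|^2 + t^2|\gamma|^2, \qquad \frac{\bar{b'}}{a} = (\omega\delta-\beta)\overline{(\alpha-\omega\gamma)} - t^2\bar\gamma\delta,
\]
which are precisely the denominator and the complex part of the numerator in the Lemma's formula for $(\omega+{\bf j}t)\cdot M$ once you write $M^{-1}=\begin{pmatrix}\delta & -\beta\\ -\gamma & \alpha\end{pmatrix}$; the ${\bf j}$-parts match because $ta = \sqrt{\Delta}$ and $\Delta' = |\det M|^2\Delta = \Delta$. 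What the direct computation buys is an explicit, self-contained derivation that also exhibits how the Hermitian coefficients transform --- information the generator argument hides --- at the cost of the bookkeeping you acknowledge. What the generator argument buys is brevity, and it is the route the paper actually takes (in sketch form). Either way your write-up is more complete than the paper's own proof.
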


\begin{proof}
The generators of $\Sl_2(\C)$ are matrices of the form $ \begin{pmatrix}  0 & a \\ 0 & 1  \end{pmatrix},~ \text{for}~a\in \C$ and $\begin{pmatrix}  0 & -1 \\ 1 & 0  \end{pmatrix}$. It is easy to show that for any generator matrix $M$: $$\z(H^M)=M^{-1}\z(H).$$
\end{proof}
 
The hyperbolic space $\H_3$ is a parameter space for positive definite ($\Delta>0,a>0$) Hermitian forms via the inverse map $$\z^{-1}(\omega)=\z^{-1}(z+{\bf j}t)= |X|^2-\bar{z}\bar XZ-zX\bar Z+(|z|^2+t^2)|Z|^2=H_{\omega}.$$ The boundary $\C\P^1=\C\cup{\infty}$ of $\H_3$ is a parameter space for the decomposable ($\Delta=0$) Hermitian forms \[H_{\beta}=(X-\bar \beta Z)(\bar X-\beta \bar Z)~\text{for}~\beta\in \C,~H_{\infty}=|Z|^2,\] Just as in the case of the upper half-plane $\H_2$, we have the following proposition:

\begin{prop} \label{convex hull H3}
Let $\overline{\H}_3=\H_3\cup \partial \H_3=\H_3\cup \C\P^1$. The hyperbolic convex hull of $\omega_1,\omega_2,...,\omega_n\in \overline{\H}_3$ parametrizes Hermitian forms $\sum_{i=1}^n \lambda_i H_{\omega_i}$ with $\lambda_i \geq 0$ for $i=1,2,...,n$ and $\sum_{i=1}^n \lambda_i=1$. 
\end{prop}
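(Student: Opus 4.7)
The plan is to mirror the proof of Proposition \ref{convex hull}: reduce to the base case $n=2$ and then induct. For the base case, I would like to avoid coordinate wrangling on an arbitrary geodesic and instead exploit the $\Sl_2(\C)$-equivariance of the zero map from Proposition \ref{complex invariant}. Since the $\Sl_2(\C)$ action on $\C\P^1=\partial\H_3$ is by Möbius transformations and hence transitive on ordered pairs of distinct boundary points, one can move the two ideal points of the geodesic through $\omega_1,\omega_2$ to $0$ and $\infty$, so the geodesic becomes the vertical ray above the origin. In this normalized position $H_0=|X|^2$ and $H_\infty=|Z|^2$, any convex combination $\lambda H_0+\mu H_\infty$ is simply $\lambda|X|^2+\mu|Z|^2$, and \eqref{eq: hermitian zero map} reads off its zero as ${\bf j}\sqrt{\mu/\lambda}$. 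As $\mu/\lambda$ runs over $[0,\infty]$ this traces out the whole vertical geodesic; in particular each $H_{\omega_i}$ is a positive multiple of some $\lambda_i H_0+\mu_i H_\infty$ with $\lambda_i+\mu_i=1$. A convex combination $sH_{\omega_1}+tH_{\omega_2}$ then collapses to a positive combination of $H_0$ and $H_\infty$, whose zero (after rescaling to sum one) sits on the segment between the images of $\omega_1$ and $\omega_2$. Transporting back by the inverse Möbius element yields the $n=2$ statement on the original geodesic.

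For the inductive step, the argument from Proposition \ref{convex hull} transfers essentially verbatim. Writing
\[\sum_{i=1}^n\lambda_i H_{\omega_i}=\left(\sum_{i=1}^{n-1}\lambda_i\right)\sum_{i=1}^{n-1}\frac{\lambda_i}{\sum_{j=1}^{n-1}\lambda_j}\,H_{\omega_i}+\lambda_n H_{\omega_n},\]
the inductive hypothesis identifies the inner sum with $H_{\omega_0}$ (up to a positive scalar) for some $\omega_0$ in the convex hull of $\omega_1,\ldots,\omega_{n-1}$, and then the $n=2$ case applied to $\omega_0$ and $\omega_n$ places the zero of the full expression on the geodesic segment joining them, which lies in the convex hull of $\omega_1,\ldots,\omega_n$.

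The main obstacle will be the scalar bookkeeping in the base case. The map $\omega\mapsto H_\omega$ picks a specific representative with leading coefficient $a=1$, whereas $\Sl_2(\C)$ acts linearly on all positive Hermitian forms; so the identity $H_\omega^M=c\,H_{M^{-1}\omega}$ holds only up to some $c>0$ that must either be tracked explicitly or killed by working with Hermitian forms up to positive scalar throughout. This compatibility is straightforward from the formula in the preceding lemma for $(z+{\bf j}t)\cdot M$ combined with \eqref{eq: hermitian zero map}, but it is the one place where the argument is not a pure transcription of the $\H_2$ proof. Once the rescaling is handled, the $n=2$ case reduces to the one-line computation on the vertical geodesic and the induction is mechanical.
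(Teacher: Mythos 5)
Your proposal is correct, but it does not follow the paper's (implicit) route: the paper offers no proof of Proposition \ref{convex hull H3} at all, merely asserting it ``just as in the case of the upper half-plane,'' so the intended argument is a transcription of the proof of Proposition \ref{convex hull} --- an explicit coordinate computation of the root of $\lambda H_a+\mu H_b$ on the semicircular geodesic, followed by the same induction. You replace that base-case computation with a normalization: use the equivariance of the zero map (Proposition \ref{complex invariant}) and the transitivity of $\Sl_2(\C)$ on pairs of boundary points to put the geodesic in the position of the vertical ray over $0$, where $\lambda H_0+\mu H_\infty=\lambda|X|^2+\mu|Z|^2$ has zero ${\bf j}\sqrt{\mu/\lambda}$ by \eqref{eq: hermitian zero map}, and the whole segment claim becomes the observation that $s t_1^2+t t_2^2$ is sandwiched between $t_1^2$ and $t_2^2$. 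This buys a genuinely cleaner argument in $\H_3$, where the direct semicircle computation (now over an arbitrary line in $\C$) is messier than its $\H_2$ counterpart; the price is the scalar bookkeeping you correctly flag, namely that $H_\omega^M$ is only a positive multiple of $H_{M^{-1}\omega}$, which is harmless because the zero map is insensitive to positive scalars and because convex weights stay convex after renormalization. One small point worth making explicit: since each $H_{\omega_i}$ is normalized to leading coefficient $1$ (except $H_\infty$), a convex combination of them again has leading coefficient $1$, so in the inductive step the inner sum is literally $H_{\omega_0}$ with no scalar at all unless one of the $\omega_i$ is $\infty$; handling that boundary case, and the degenerate case $\omega_1=\omega_2$ where no unique geodesic exists, is routine but should be mentioned for completeness.
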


The equivariant connection between the geometry of hyperbolic spaces and the algebra of positive definite forms, which extends to the boundary as well, can be expressed in the following equivariant commutative diagram:

\[
\xymatrix{
 V^+_{2,\R}   \ar@{->}[d]  \ar@{->}[r]^\z      & \H_2  \ar@{->}[d]^{}  \\
 V^+_{2,\C}        \ar@{->}[r]^\z                   & \H_3 \\
}
\]
Next, we will see how to use the equivariance of the zero map to construct a reduction method.  
\section{Reduction of binary forms via the Julia quadratic} 

In this section we summarize the reduction of binary forms via the zero map obtained in \cite{julia} and \cite{SC}. We will focus especially on the geometric features of the theory which are of particular interest to us.

Let $V_n(\C)$ denote the space of complex binary forms of degree $n$. If $F\in V_n(\C)$ then
 \[ F(X,Z)=a_0\prod_{i=1}^n(X-\alpha_iZ) \]
for some complex numbers $\alpha_j$ and $a_0\neq 0$. For $t_1,t_2,...,t_n\geq 0$ define \[Q_F(t_1,t_2,...,t_n)=\sum_{i=1}^{n}t_i|X-\alpha_iZ|^2=\sum_{i=1}^{n}t_iH_{\alpha_i}(X,Z).\]

From Proposition \ref{convex hull H3} above, the positive definite Hermitian forms $Q_F(t_1,t_2,...,t_n)$ parametrize the hyperbolic convex hull of $\alpha_1,\alpha_2,...,\alpha_n\in \overline{\H}_3$. Let $(t_1^0,t_2^0,...,t_n^0)$ be the values that minimize \[\theta_0:=\frac{a_0^2(\text{disc}(Q_F))^{n/2}}{n^nt_1t_2...t_n}.\] 

\begin{defn}
The form $\J_F:=Q_F(t_1^0,t_2^0,...,t_n^0)\in V^+_{2,\C}$ is called the \underline{Julia quadratic} of $F$. The zero map extends to $\z: V_n(\C)\rightarrow \H_3$ via $\z(F)=\z(J_F)\in \H_3$. The form $F$ is called \underline{reduced} if $\z(F)$ is in the fundamental domain $\mathcal F$ of $\Sl_2(\C)$.
\end{defn}

To reduce a real binary form $F(X,Z)$ we first compute its zero map value $\z(F)$ in $\H_2$. If $\z(F)$ is in the fundamental domain $\F$ of $\Sl_2(\R)$ then $F(X,Z)$ is already reduced. If not, let $M\in \Sl_2(\R)$ such that  $\z(F)\cdot M\in \F$. The form $F(X,Z)$ reduces to $F^{M}(X,Z)$ which is expected to have smaller coefficients. Similar procedure holds in $\H_3$ for complex binary forms. 

In \cite{SC}, the authors provide a geometric description of the zero map. The roots $\alpha_i,~i=1,2,...,n$ of $F(X,Z)$ are placed in the floor $t=0$ of $\H_3$. Recall the hyperbolic distance between a point $w=z+{\bf j}t\in \H_3$ and $\omega \in \C$ in the boundary floor: $$d_H(w,\omega)=\ln \frac {|z-\omega|^2+t^2}{t}.$$

\begin{prop}
(Proposition $5.3$ in \cite{SC}) The zero map value $\z(F)$ is the unique point $w_0\in \H_3$ that minimizes the sum of distances $$\tilde{F}(w):=\displaystyle{\sum_{i=1}^n d_H(w,\alpha_i)}.$$
\end{prop}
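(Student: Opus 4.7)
My plan is to identify $\tilde F$ with a scale-invariant function on the cone of positive definite Hermitian forms, compute its critical points by direct calculus, and match them with the equations defining the Julia quadratic. For $w = z + \mathbf{j}t \in \H_3$, I would first normalize $H_w := \z^{-1}(w)$ so that its coefficient of $|X|^2$ equals $1$; a short calculation gives $H_w(\a_i, 1) = |z - \a_i|^2 + t^2$ and $\text{disc}(H_w) = t^2$, so that
\[ \tilde F(w) = \sum_{i=1}^n \ln H_w(\a_i, 1) - n\ln t = \sum_{i=1}^n \ln H_w(\a_i, 1) - \tfrac{n}{2}\ln\text{disc}(H_w). \]
Denoting the right-hand side by $g(H)$ for an arbitrary positive definite Hermitian form $H$, I would note that both $\prod_i H(\a_i, 1)$ and $\text{disc}(H)^{n/2}$ are homogeneous of degree $n$ in $H$, so $g$ is invariant under $H \mapsto \lambda H$ and descends to a function on $\H_3$ that coincides with $\tilde F$.

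The next step is to compute stationary points of $g(H)$ over arbitrary positive definite $H = a|X|^2 - bX\bar Z - \bar b \bar X Z + c|Z|^2$. Using $\partial_a H(\a_i, 1) = |\a_i|^2$, $\partial_c H(\a_i, 1) = 1$, $\partial_b H(\a_i, 1) = -\a_i$ and the analogous derivatives of $\text{disc}(H) = ac - |b|^2$, and setting $s_i := \frac{2\,\text{disc}(H)}{n\, H(\a_i, 1)} > 0$, the four stationary equations rearrange into
\[ \sum_i s_i = a, \quad \sum_i s_i |\a_i|^2 = c, \quad \sum_i s_i \bar\a_i = b, \quad \sum_i s_i \a_i = \bar b, \]
which are exactly the coefficient identities saying $H = \sum_i s_i H_{\a_i}$. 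Hence every critical $H$ of $g$ lies in the hyperbolic convex hull of $\{\a_i\}$ and belongs to Julia's family $Q_F(s_1,\ldots,s_n)$.

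To identify this critical $H$ with $\J_F$, I would differentiate $\ln\theta_0 = 2\ln|a_0| + \tfrac{n}{2}\ln\text{disc}(Q_F) - n\ln n - \sum_k \ln t_k$ in each $t_k$. The identity $\partial_{t_k}\text{disc}(Q_F) = Q_F(\a_k, 1)$, which is immediate from $\text{disc}(Q_F) = (\sum t_j)(\sum t_j|\a_j|^2) - |\sum t_j\bar\a_j|^2$, turns Julia's critical condition into $t_k = \frac{2\,\text{disc}(Q_F)}{n\, Q_F(\a_k, 1)}$, the very same system as defines the $s_i$ above. Consequently the critical form of $g$ is, up to positive scaling, $\J_F$, and therefore $\z(\J_F) = \z(F)$ is a critical point of $\tilde F$ on $\H_3$.

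The main obstacle is to upgrade ``critical point'' to ``unique global minimum.'' I would establish this via geodesic convexity: for $\a \in \partial \H_3$ the function $d_H(\cdot, \a)$ is a horospherical height function whose sublevel sets (horoballs) are geodesically convex in $\H_3$, and a direct second-derivative computation along a vertical geodesic, extended to all geodesics by the $\Sl_2(\C)$-invariance of Proposition \ref{complex invariant}, shows strict convexity along every geodesic not having $\a$ as an ideal point. Since the roots of $F$ cannot all be the two ideal points of a single geodesic in the generic setting to which the proposition applies, $\tilde F$ is strictly convex and coercive on $\H_3$, whence its critical point is the unique minimizer, completing the proof.
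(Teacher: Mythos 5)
The paper itself offers no proof of this statement---it is quoted verbatim from \cite{SC} (Proposition 5.3 there)---so there is nothing internal to compare against; your proposal is essentially a reconstruction of the Stoll--Cremona argument, and its skeleton is sound. The identification $\tilde F(w)=\sum_i\ln H_w(\a_i,1)-\tfrac n2\ln\mathrm{disc}(H_w)$ is correct, the scale-invariance observation is the right way to pass between the cone of positive definite Hermitian forms and $\H_3$, and your two computations do match: the stationarity system $s_i=\frac{2\,\mathrm{disc}(H)}{nH(\a_i,1)}$, $H=\sum_i s_iH_{\a_i}$ for $g$ is literally the same fixed-point system as the one obtained from $\partial_{t_k}\ln\theta_0=0$ via the (correct) identity $\partial_{t_k}\mathrm{disc}(Q_F)=Q_F(\a_k,1)$. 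Combined with strict geodesic convexity, which forces the critical point of $g$ to be unique, this legitimately identifies the minimizer of $\tilde F$ with $\z(\J_F)$. (The various conjugations in your coefficient identities do not quite match the paper's conventions, but the paper's own conventions for $H_\beta$ versus $|X-\a Z|^2$ are internally inconsistent, so this is not a real defect. Also, your ``extended to all geodesics by $\Sl_2(\C)$-invariance'' should acknowledge that $d_H(\cdot,\a)$ is only invariant up to an additive constant under isometries, which is harmless for convexity but worth saying.)

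The one genuine gap is the existence step, which you wave at with ``generic setting.'' Coercivity of $\tilde F$ fails precisely when some root has multiplicity $\geq n/2$: near a root $\a_i$ of multiplicity $m_i$ the sum behaves like $(2m_i-n)\ln t$, so for $m_i\geq n/2$ the infimum is not attained (and for two roots each of multiplicity $n/2$ the function is affine along the geodesic joining them, killing strict convexity as well). Relatedly, your use of $\partial_{t_k}\ln\theta_0=0$ presupposes that the minimizer of $\theta_0$ has all $t_k^0>0$, which is also guaranteed only under this same multiplicity hypothesis. The statement as quoted suppresses this hypothesis, but a complete proof must either assume it explicitly or prove that the roots of the forms under consideration satisfy it; as written, your argument establishes uniqueness of the minimizer conditional on its existence.
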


We emphasize that the minimized sum of the hyperbolic distances is not $\Sl_2(\C)$-invariant but its sum with $2\ln a_0$ is.  Here $M\in \Sl_2(\C)$ acts by the linear change of variables on $F(X,Z)$ and acts on the right on $w_0$. 

Another equivalent, geometric description of the zero map is given by the following statement:

\begin{cor}\label{tangent vectors}
(Corollary $5.4$ in \cite{SC}) The zero map value $\z(F)$ is the unique point $w_0\in \H_3$ such that the unit tangent vectors at $w_0$ along the geodesics to the roots $\alpha_i$ add up to zero.
\end{cor}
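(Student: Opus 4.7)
The plan is to derive this characterization by writing down the critical-point equation for the function $\tilde F(w) = \sum_{i=1}^n d_H(w, \alpha_i)$, whose unique minimizer is $w_0 = \z(F)$ by the previous proposition. Since $\tilde F$ is smooth on all of $\H_3$ (the roots $\alpha_i$ sit on the boundary and not in the interior), the minimum is a critical point of $\tilde F$, and the corollary reduces to identifying the hyperbolic gradient of each summand $d_H(\cdot, \alpha_i)$.

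First I would compute $\nabla d_H(w, \alpha_i)$ directly from the formula $d_H(w, \alpha_i) = \ln\bigl((|z - \alpha_i|^2 + t^2)/t\bigr)$ in coordinates $w = z + {\bf j} t$. Because the hyperbolic metric on $\H_3$ is conformally flat with conformal factor $1/t^2$, the Riemannian gradient equals $t^2$ times the Euclidean gradient, and a short calculation using the identity $4 |z-\alpha_i|^2 t^2 + (t^2 - |z-\alpha_i|^2)^2 = (t^2 + |z-\alpha_i|^2)^2$ shows that the resulting vector has hyperbolic length equal to $1$. I would then pin down its direction geometrically: the level sets of $d_H(\cdot, \alpha_i)$ are the horospheres based at $\alpha_i$ (so $d_H(\cdot, \alpha_i)$ is a Busemann-type function for that boundary point), and these horospheres are foliated orthogonally by the geodesics emanating from $\alpha_i$. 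Consequently the unit gradient points along this geodesic family in the direction of increasing distance, i.e.\ away from $\alpha_i$; writing $T_i(w)$ for the unit tangent at $w$ along the geodesic from $w$ toward $\alpha_i$, this gives $\nabla d_H(w, \alpha_i) = -T_i(w)$.

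Summing over $i$ and applying the critical-point condition at $w_0$, the equation $\nabla \tilde F(w_0) = 0$ becomes $\sum_{i=1}^n T_i(w_0) = 0$, which is the claim. Uniqueness of the point satisfying this condition is inherited from the uniqueness of the minimizer in the previous proposition, which in turn reflects the strict convexity of $\tilde F$ along hyperbolic geodesics (each Busemann function is convex, and the fact that $F$ has more than one root prevents a degenerate linear summand). The one step that requires real work is the identification $\nabla d_H(w, \alpha_i) = -T_i(w)$: the case $\alpha_i \in \C$ is handled by the coordinate computation above, and the case $\alpha_i = \infty$ can either be treated directly (the geodesic becomes the vertical ray through $w$ and $d_H(w, \infty) = -\ln t$) or reduced to the finite case by invoking the $\Sl_2(\C)$-equivariance of $\z$ to move $\infty$ to a finite boundary point.
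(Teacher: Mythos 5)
The paper does not actually prove this statement: it is quoted verbatim as Corollary 5.4 of \cite{SC}, with no argument supplied, so there is nothing in the text to compare your proof against line by line. That said, your argument is correct and is essentially the standard derivation (and, in substance, the one given in \cite{SC} itself): the minimizer $w_0$ of $\tilde F(w)=\sum_i d_H(w,\alpha_i)$ from the preceding proposition is an interior critical point of a smooth function; each summand $d_H(\cdot,\alpha_i)=\ln\bigl((|z-\alpha_i|^2+t^2)/t\bigr)$ is a Busemann-type function for the boundary point $\alpha_i$, whose hyperbolic gradient has unit length (your identity $4|z-\alpha_i|^2t^2+(t^2-|z-\alpha_i|^2)^2=(t^2+|z-\alpha_i|^2)^2$, combined with $\nabla f=t^2\nabla_{\mathrm{euc}}f$, checks out) and is orthogonal to the horospherical level sets, hence equals $-T_i(w)$; so $\nabla\tilde F(w_0)=0$ is exactly $\sum_i T_i(w_0)=0$, and convexity of each summand along geodesics guarantees that every critical point is a global minimizer, so uniqueness is inherited from the proposition.

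One small imprecision worth fixing: having ``more than one root'' does not by itself rule out a degenerate direction. If the roots are concentrated at exactly two boundary points with equal multiplicities, $\tilde F$ is constant along the geodesic joining them and the minimizer is not unique; strict convexity requires at least three distinct roots (or an unbalanced distribution over two). This caveat really belongs to the previous proposition, which you are entitled to cite as a black box, so it does not damage your derivation of the corollary --- but the parenthetical justification as written is not quite right and should either be corrected or dropped in favor of simply invoking the cited uniqueness.
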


As mentioned above, this minimizing solution $w_0$ is $\Sl_2(\C)$-invariant. Furthermore, when $F(X,Z)$ has real coefficients, $w_0$ is also invariant with respect to the partial conjugation $w_0=z_0+t_0{\bf j}\mapsto \bar z_0+t_0{\bf j}$. Hence, $z_0$ is real number, i.e. $w_0\in \H_2$.

\section{The reduction of real forms via the hyperbolic center of mass}

In this section we introduce an alternative zero map for binary forms with real coefficients and no real roots. It is based on the notion of \textit{hyperbolic center of mass} in hyperbolic spaces. We focus in $\H_2$ which is the case of interest for us, but the general case is straightforward. Our treatment follows closely that of \cite{gal}.
 
\subsection{The center of mass via the hyperboloid model of the hyperbolic plane}  Met $M$ be the Minkowski pairing in $\R^3$: for ${\bf x}=(x_1,x_2,x_3),{\bf y}=(y_1,y_2,y_3)$
\[ M({\bf x}, {\bf y})=-x_1y_1-x_2y_2+x_3y_3.\]
Denote the corresponding norm $||{\bf x}||^2=M({\bf x},{\bf x})=-x_1^2-x_2^2+x_3^2$. Let $\H$ be the upper sheet of the hyperboloid \[\H:=\{{\bf x}~:~||{\bf x}||=1,~x_3>0\}.\] Its equation is $-x_1^2-x_2^2+x_3^2=1$ and its metric is given by $ds^2=dx_1^2+dx_2^2-dx_3^2$. If ${\bf x},{\bf y}\in \H$, the hyperbolic distance $d_H({\bf x},{\bf y})$ in this model can be found via \[\cosh d_H({\bf x},{\bf y})=M({\bf x},{\bf y}).\]

\begin{defn}
Let ${\bf x}_j\in \H,~j=1,2,...,r$. Their \underline{center of mass} is defined as $${\mathcal C}={\mathcal C}_{\H}(x_1,x_2,...,x_r):=\displaystyle{\frac{\sum_{j=1}^r{\bf x}_j}{||\sum_{j=1}^r{\bf x}_j||}}.$$ 
\end{defn}
Notice that $$\sum_{i=1}^r \cosh (d_H({\mathcal C},{\bf x}_i))=\sum_{i=1}^r M({\mathcal C},{\bf x}_i)=M({\mathcal C},\sum_{i=1}^r{\bf x}_i)=||\sum_{i=1}^r{\bf x}_i||M({\mathcal C},{\mathcal C})=||\sum_{i=1}^r{\bf x}_i||$$

\begin{prop}
The center of mass ${\mathcal C}_{\H}(x_1,x_2,...,x_r)$ is $\Sl_2(\R)$ invariant. It is the unique point ${\bf x}\in \H$ that minimizes $\sum_{j=1}^r \cosh (d_H({\bf x},{\bf x}_j))$. 
\end{prop}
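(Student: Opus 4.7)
The plan is to translate both claims into statements about the Minkowski pairing and then rely on the reverse Cauchy--Schwarz inequality for future--timelike vectors.

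First I would verify that $\mathcal C$ is well defined. Each $\mathbf x_j$ lies in the future timelike cone $\{x_3>0,\ \|\mathbf x\|^2\geq 1\}$, and this cone is convex under both addition and positive scaling. Moreover, the reverse Cauchy--Schwarz inequality
\[ M(\mathbf u,\mathbf v)\geq \|\mathbf u\|\cdot\|\mathbf v\|>0\quad\text{for }\mathbf u,\mathbf v\text{ in the future cone},\]
with equality iff $\mathbf u,\mathbf v$ are positively proportional, implies by induction that $S:=\sum_j\mathbf x_j$ satisfies $M(S,S)>0$ and has positive third coordinate. Hence $\|S\|$ is a positive real number and $\mathcal C=S/\|S\|\in\H$.

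For $\Sl_2(\R)$-invariance I would use that the action of $\Sl_2(\R)$ on $\H$ factors through $\mathrm{SO}^+(2,1)$, i.e.\ it is realized by linear maps $A$ on $\R^3$ preserving $M$, hence also $\|\cdot\|$ and the forward cone. Then $A(S)=\sum A(\mathbf x_j)$ by linearity and $\|A(S)\|=\|S\|$, so $A(\mathcal C)=\mathcal C(A\mathbf x_1,\ldots,A\mathbf x_r)$, proving equivariance.

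For the minimization claim I would rewrite the objective using the identity $\cosh d_H(\mathbf x,\mathbf x_j)=M(\mathbf x,\mathbf x_j)$. Linearity of $M$ in the second slot gives
\[ \sum_{j=1}^r\cosh d_H(\mathbf x,\mathbf x_j)=M(\mathbf x,S)\qquad\text{for every }\mathbf x\in\H. \]
Since both $\mathbf x$ (with $\|\mathbf x\|=1$) and $S$ are future timelike, the reverse Cauchy--Schwarz inequality gives $M(\mathbf x,S)\geq\|\mathbf x\|\cdot\|S\|=\|S\|$, with equality iff $\mathbf x$ is a positive scalar multiple of $S$. The unique such $\mathbf x$ in $\H$ is $S/\|S\|=\mathcal C$, so $\mathcal C$ is the unique minimizer and the minimum value is exactly $\|S\|$, consistent with the computation in the paragraph preceding the proposition.

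The one step that requires some care is the reverse Cauchy--Schwarz inequality for the Minkowski form. I would either cite it as standard (it is the hyperbolic analogue of the usual Cauchy--Schwarz, and is essentially the statement that $\cosh$ is convex along geodesics in $\H$) or give a short direct proof: reduce by $\mathrm{SO}^+(2,1)$-invariance to the case $\mathbf u=(0,0,\|\mathbf u\|)$, where the inequality becomes $\|\mathbf u\|v_3\geq \|\mathbf u\|\sqrt{v_3^2-v_1^2-v_2^2}$, which is immediate, with equality exactly when $v_1=v_2=0$, i.e.\ when $\mathbf v$ is a positive multiple of $\mathbf u$. Apart from this lemma, the rest of the argument is a direct application of linearity and the definition of $\mathcal C$.
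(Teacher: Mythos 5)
Your proof is correct, and on the key point it takes a genuinely different route from the paper. The paper disposes of the minimization claim by setting up the constrained problem ``minimize $M({\bf x},\sum_j {\bf x}_j)$ subject to ${\bf x}\in\H$'' and invoking Lagrange multipliers together with the inequality $\|\sum_j{\bf x}_j\|>1$; you instead get the whole statement from the reverse Cauchy--Schwarz inequality for future timelike vectors, $M({\bf u},{\bf v})\ge\|{\bf u}\|\,\|{\bf v}\|$ with equality iff ${\bf u},{\bf v}$ are positively proportional. Your approach buys several things the paper leaves implicit: it shows that $\sum_j{\bf x}_j$ is itself future timelike so that $\mathcal C$ is well defined, it identifies the minimum value as exactly $\|\sum_j{\bf x}_j\|$ (matching the computation displayed just before the proposition), and --- most importantly --- it delivers uniqueness of the minimizer directly from the equality case, whereas Lagrange multipliers only locates critical points and strictly speaking leaves the global-minimum and uniqueness claims to be argued separately. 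Your treatment of invariance is also slightly more careful: the paper says the center of mass is invariant ``because the action preserves hyperbolic distances,'' which really only applies once the minimizing characterization is established, while you use that the action is realized by linear maps in $\mathrm{SO}^+(2,1)$ preserving $M$, so equivariance follows from linearity of the defining formula itself. One cosmetic remark: the set you call the future cone, $\{x_3>0,\ \|{\bf x}\|^2\ge 1\}$, is closed under addition (which is all you use) but not under scaling by constants less than $1$; phrasing it as the future timelike cone $\{x_3>0,\ \|{\bf x}\|^2>0\}$ avoids the quibble.
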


\begin{proof}
Recall that $\Sl_2(\R)$ action on $\H$ preserves hyperbolic distances, hence the center of mass is $\Sl_2(\R)$ invariant. The proof of the second part follows easily by solving the minimizing problem $$\text{minimize}~M({\bf x},\sum_{j=1}^r{\bf x}_j),~~\text{subject to}~{\bf x}\in \H$$  using the Lagrange Multipliers method and the inequality $||\sum_{j=1}^r{\bf x}_j||>1$.

\end{proof}

We use the minimizing property to transfer the notion of center of mass in $\H_2$. There is an isometry $\H_2\rightarrow \H$ given by 
 \[ u+{\bf i}v \rightarrow \left(   \frac{1-u^2-v^2}{2u}, \frac{u}{v},\frac{1+u^2+v^2}{2v} \right).\] The following identity holds in $\H_2$:
$$\cosh d_H(z_1,z_2)=1+\frac{|z_1-z_2|^2}{2y_1y_2}$$ 
for $z_1=x_1+{\bf i}y_1\in \H_2,~z_2=x_2+{\bf i}y_2\in \H_2$.  It follows that if $\alpha_j=x_j+{\bf i}y_j\in \H_2,~j=1,2,...,n$, their center of mass is the complex number $t+{\bf i}u\in \H_2$ such that $$\sum_{j=1}^n \left[1+\frac{(t-x_j)^2+(u-y_j)^2}{2uy_j}\right ]$$ is minimal. By excluding the constant summands, we obtain the following:

\begin{defn} The hyperbolic center of mass ${\mathcal C}_{\H}(\alpha_1,\alpha_2,...,\alpha_n)$ of the collection $\{\alpha_j\in \H_2~ | j=1,2,...,n\}$ is the unique point $t+{\bf i}u\in \H_2$ that minimizes $$\sum_{j=1}^n \frac{(t-x_j)^2+(u-y_j)^2}{uy_j}.$$ 
\end{defn}

Setting the partials equal to zero, we obtain a system of equations for the center of mass ${\mathcal C}_{\H}(\alpha_1,\alpha_2,...,\alpha_n)=t+{\bf i}u\in \H_2$:

\begin{equation}\label{center of mass system}
\left\{
\begin{split}
& \sum_{j=1}^n \frac{t-x_j}{y_j} = 0\\
&  \sum_{j=1}^n \frac{u^2-(t^2-2x_jt+x_j^2+y_j^2)}{y_j} = 0
\end{split}
\right.
\end{equation}
With substitutions
\begin{equation}\label{eq_q}
q_j =q_j(t):= t^2- 2 x_j t + x_j^2 +y_j^2=Q_{\alpha_j} (t,1), 
\end{equation}
 the solution of the above system is given by 
\begin{equation}\label{centroid}
\begin{split}
 \displaystyle{t} & \displaystyle{= \frac { \sum_{i=1}^n   y_1 y_2 \cdots y_{i-1} x_i y_{i+1} \cdots y_n }   { \sum_{i=1}^n   y_1 y_2 \cdots y_{i-1} \,  y_{i+1} \cdots y_n  }}  \\
\displaystyle{u^2}  & =  \displaystyle{\frac { \sum_{i=1}^n   y_1 y_2 \cdots y_{i-1} q_i y_{i+1} \cdots y_n }   { \sum_{i=1}^n   y_1 y_2 \cdots y_{i-1} \,  y_{i+1} \cdots y_n  }}  \\
\end{split}
\end{equation}
%
%
%
%
%

Solutions to the system of the equations in Eq.~\eqref{centroid} are easy to describe. Let $\psi : \R^n \times \R^n   \mapsto \R$, be defined by
\begin{equation}\label{map}
\begin{split}
\psi\left(     \left( x_1, \dots , x_n \right), \left( y_1, \dots , y_n \right) \right)  &= \frac { \sum_{i=1}^n   y_1 y_2 \cdots y_{i-1} x_i y_{i+1} \cdots y_n }   { \sum_{i=1}^n   y_1 y_2 \cdots y_{i-1} \,  y_{i+1} \cdots y_n  }   \\
\end{split}
\end{equation} 
Let ${\bf x} = ( x_1, \dots , x_n )$,  ${\bf y} = ( y_1, \dots , y_n )$.
%
Now we have
\begin{equation} 
t= \psi ({\bf x}, {\bf y}  ), \qquad u= \sqrt{ \psi (  {\bf q}, {\bf y}  )}~ \text{where}~ {\bf q} = ( q_1(t), \dots , q_n(t) ) 
\end{equation}

\begin{rem}
The function $\psi$ has symmetries and is a combination of $x_i$'s with positive weights that add to one. Weights depend only on $y_i$'s. It is probably a well-known and standard function in areas where symmetries and group actions are relevant.
\end{rem}

We now introduce an alternative reduction theory based on the notion of the hyperbolic center of mass. Let $V^+_{2n,\R}(0,n)$ denote binary forms of degree $2n$ with real coefficients and no real roots. Every $F(X,Z)\in V^+_{2n,\R}(0,n)$ can be factored 
\[F(X,Z)=\prod_{i=1}^n Q_{\alpha_i}(X,Z)\] where 
\[Q_{\alpha_i}(X,Z)=(X-\alpha_iZ)(X-\overline{\alpha_i}Z)\]

\begin{defn}
The hyperbolic center zero map $\z_{\mathcal C}: V^+_{2n,\R}(0,n)\rightarrow \H_2$ is defined via 
\[\z_{\mathcal C}(F):=\mathcal C={\mathcal C}_{\H}(\alpha_1,\alpha_2,...,\alpha_n).\] 
The form
\[\J^{\mathcal C}_F= (X-{\mathcal C}Z)(X-\overline{\mathcal C}Z)\] 
is called the hyperbolic center quadratic of $F$.
\end{defn}

The reduction theory based on the hyperbolic center of mass proceeds as before. Let $F(X,Z)$ be a real binary form with no real roots. If $\z^{\mathcal C}(F)\in \F$ then $F$ is reduced. Otherwise, let $M\in \Sl_2(\R)$ such that $\z^{\mathcal C}(F)\cdot M \in \F$. The form $F$ reduces to $F^M(X,Z)$.

Here is a comparison between the reduction of \cite{julia} \cite{SC} and the one via the hyperbolic center.

\begin{exa}
Let $F(X,Z)$ be the binary sextic with roots $\a_1 = 2+3i$, $\a_2 = 6+4i$, $\a_3 =4+7i$ and their conjugates. Then 
\[ F(X, Z) = (X^2-4 X + 13) (X^2 - 12 X + 52) (X^2 - 8 X + 65). \]

Consider the genus 2 curve
\[ y^2 = X^6 - 24 X^5 + 306 X^4 - 2308 X^3 +     10933 X^2 - 29068 X + 43940 \]
with height $\mathfrak h = 43940$.
Reducing it via \cite{SC} yields a curve $C^\prime$ with equation
\[ y^2 + (X^3 + X)y =  16 X^4 + 7 X^3 + 273 X^2 +    343 X + 3185  \]
which is isomorphic to 
\[
\begin{split}
 Y^2 & = (X^3+X)^2 + 4 \, \left(  16 X^4 + 7 X^3 + 273 X^2 +    343 X + 3185  \right) \\
 & X^6 + 66X^4 + 28 X^3 + 1093X^2 + 1372 X + 12740.   \\
\end{split}
 \]
This last curve has height $\mathfrak h = 12 740$, which is smaller than the original height. 

The reduction  via the hyperbolic center of mass is as follows.  The zero map $\z_{\mathcal C}(F)$ is 
\[ \z_{\mathcal C}(F) = \frac {230} {61} + i \; \frac {14} {61} \, \sqrt{2\cdot 3 \cdot 71} \approx 3.77 + i\,  4.73 \]
To bring this point to the fundamental domain we have to shift it to the left by 4 units.  Hence, we must compute 
\[  f(X+4) = X^6 + 66X^4 + 28 X^3 + 1093X^2 + 1372 X + 12740.\]
which has height $\mathfrak h = 12740$, the same as in the Julia case. 
\end{exa}

We generalize the case of totally complex sextics with the following lemma.   

\begin{lem}
Let $F(X, Z) \in \Z[X, Z] $ be a totally complex sextic factored over $\R$ as 
\[ F(X,Z)= (X^2 + a_1 X Z + b_1 Z^2) (Z^2 + a_2 X Z + b_2 Z^2) (X^2 + a_3 X Z + b_3 Z^2) \]
and denote by $d_j = \sqrt{4b_j-a_j^2}$, for $j=1, 2, 3$, ${\bf d}=(d_1, d_2, d_3)$, and ${\bf a} = (a_1, a_2, a_3)$. 
The hyperbolic center zero map of $F$ is given by 
\begin{small}
\[
\begin{split}
  t & = -\frac{1}{2}\psi({\bf a}, {\bf d}) \\
   u^2 &  = \psi({\bf b},{\bf d})-\frac{1}{4}\psi^2({\bf a},{\bf d})
\end{split}
\]
\end{small}
The hyperbolic center quadratic $\J^{\mathcal C}_F$ is defined over $Q(\sqrt{d_1}, \sqrt{d_2}, \sqrt{d_3})$. 
\end{lem}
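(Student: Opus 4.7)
The plan is to substitute the explicit roots of each quadratic factor into the formulas Eq.~\eqref{centroid} for the hyperbolic center of mass, and then exploit three easy algebraic properties of the function $\psi$ from \eqref{map}: it is $\R$-linear in its first slot; it is invariant under scaling of its second slot (numerator and denominator both scale by $c^{n-1}$ when $\mathbf{y}\mapsto c\mathbf{y}$); and $\psi(\mathbf{1},\mathbf{y})=1$ for $\mathbf{1}=(1,1,\dots,1)$.

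First I identify the roots. Since $X^2+a_jX+b_j$ has discriminant $a_j^2-4b_j=-d_j^2$, its root in $\H_2$ is $\alpha_j=-a_j/2+{\bf i}\,d_j/2$, so in the notation of \eqref{centroid} one has $x_j=-a_j/2$ and $y_j=d_j/2$. By linearity in the first slot and scaling invariance in the second,
\[
t=\psi\!\left(-\tfrac{1}{2}\mathbf{a},\,\tfrac{1}{2}\mathbf{d}\right)=-\tfrac{1}{2}\psi(\mathbf{a},\mathbf{d}),
\]
which is the first of the claimed formulas.

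For $u^2$, the key observation is that $q_j(t)$ from \eqref{eq_q} simplifies cleanly: $q_j(t)=t^2+a_jt+(a_j^2+d_j^2)/4 = t^2+a_jt+b_j$, where I use the defining identity $d_j^2=4b_j-a_j^2$. Therefore $\mathbf{q}=t^2\mathbf{1}+t\,\mathbf{a}+\mathbf{b}$, and by the three properties of $\psi$ listed above together with scaling invariance to replace $\mathbf{y}=\mathbf{d}/2$ by $\mathbf{d}$,
\[
u^2=\psi(\mathbf{q},\mathbf{d})=t^2+t\,\psi(\mathbf{a},\mathbf{d})+\psi(\mathbf{b},\mathbf{d}).
\]
Substituting $t=-\tfrac{1}{2}\psi(\mathbf{a},\mathbf{d})$ collapses the first two summands to $-\tfrac{1}{4}\psi^2(\mathbf{a},\mathbf{d})$, giving $u^2=\psi(\mathbf{b},\mathbf{d})-\tfrac{1}{4}\psi^2(\mathbf{a},\mathbf{d})$, the second formula.

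Finally, the quadratic is $\J^{\mathcal C}_F = X^2-2tXZ+(t^2+u^2)Z^2$, whose coefficients
\[
-2t=\psi(\mathbf{a},\mathbf{d}),\qquad t^2+u^2=\psi(\mathbf{b},\mathbf{d}),
\]
are rational functions of $a_j,b_j,d_j$. Since $a_j,b_j\in\Q$, these coefficients lie in $\Q(d_1,d_2,d_3)\subseteq \Q(\sqrt{d_1},\sqrt{d_2},\sqrt{d_3})$, settling the field-of-definition claim. There is no genuine obstacle in this argument; it is essentially bookkeeping. The only spot where one might stumble is forgetting the degree-zero homogeneity of $\psi$ in its second argument, which would otherwise clutter the formulas with spurious powers of $2$ coming from $y_j = d_j/2$.
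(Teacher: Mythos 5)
Your proof is correct and follows essentially the same route as the paper's: substitute $x_j=-a_j/2$, $y_j=d_j/2$ into the centroid formulas, note $q_j(t)=t^2+a_jt+b_j$, and use the linearity and weight-normalization of $\psi$ to collapse $u^2$ to $t^2+t\,\psi(\mathbf{a},\mathbf{d})+\psi(\mathbf{b},\mathbf{d})$ before substituting $t$. The only difference is cosmetic: you make explicit the three properties of $\psi$ that the paper dismisses as ``simple algebra,'' and you actually spell out the field-of-definition argument (coefficients $-2t$ and $t^2+u^2$ rational in the $d_j$), which the paper's proof leaves implicit.
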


\proof Let $\alpha_j=x_j+{\bf i}y_j,~i=1,2,3$ be the roots of $F$.  Since $d_j = \sqrt{4b_j-a_j^2}$, then $x_j = - \frac 1 2 a_j$ and $y_j = \frac 1 2 d_j$. The formulas for $t$ and $u^2$ in terms of the roots $\alpha_j$ are
\[
t= \frac {y_1 y_2 x_3 + y_1 y_3 x_2 + y_2 y_3 x_1 } { y_1 y_2 + y_1 y_3 + y_2 y_3 },    \qquad u^2 = \frac {y_1 y_2 q_3 + y_1 y_3 q_2 + y_2 y_3 q_1 } { y_1 y_2 + y_1 y_3 + y_2 y_3 } 
\]
Substituting $x_j,y_j,~j=1,2,3$ yields immediately the result for $t$. To obtain the expression for $u^2$,  we substitute \[y_j=\frac{d_j}{2}~\text{and}~q_j=q_j(t)=t^2+a_jt+b_j\] in the formula for $u^2$

\[u^2 = \frac {y_2 y_3 }{ y_1 y_2 + y_1 y_3 + y_2 y_3}q_1 + +\frac{y_1 y_3 }{ y_1 y_2 + y_1 y_3 + y_2 y_3}q_2 + \frac{y_1 y_2 }{ y_1 y_2 + y_1 y_3 + y_2 y_3}q_3\]

Simple algebra yields \[u^2=t^2+t\psi({\bf a},{\bf d})+\psi({\bf b},{\bf d})\] and the result follows by substituting $t=-\frac{1}{2}\psi({\bf a}, {\bf d}) $.
\qed

Note that this lemma is an improvement compared to results obtained in \cite{beshaj} and \cite{SC}, where the zero map of a sextix is given in terms of its roots.  It is straightforward to generalize and prove these results to any degree. 

\begin{prop}
Let $F(X, Z)$ be a totally complex form factored over $\R$ as below
\[ F(X, Z)= \prod_{i=1}^r (X^2 + a_i X Z  + b_i Z^2) \]
Denote by $d_i = \sqrt{4b_i-a_i^2}$, for $i=1, \dots ,  r$ the discriminants for each factor of $F(X,Z)$,  ${\bf d}=(d_1, \dots , d_r)$, and ${\bf a} = (a_1, \dots , a_r)$.     
Then, the image $\z(F)$ of the zero map is given by
\begin{equation}
\begin{split}
  t &= \, -\frac{1}{2}\psi({\bf a}, {\bf d}) \\
   u^2 &  = \psi({\bf b},{\bf d})-\frac{1}{4}\psi^2({\bf a},{\bf d})    
\end{split}
\end{equation}
The hyperbolic center quadratic $\J^{\mathcal C}_F$ is defined over $Q(\sqrt{d_1}, \sqrt{d_2}, \sqrt{d_3})$. . 
\end{prop}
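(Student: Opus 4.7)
The statement is a direct generalization of the previous lemma from three factors to $r$ factors, and my proof would follow the same path, with the structural properties of the auxiliary map $\psi$ from Eq.~\eqref{map} doing all the work. Two features of $\psi$ are essential: it is linear in its first slot, and it is scale-invariant in the second slot in the sense that $\psi(\mathbf{x}, c\mathbf{y}) = \psi(\mathbf{x}, \mathbf{y})$ for every positive scalar $c$, because both numerator and denominator of $\psi$ are homogeneous of degree $n-1$ in $\mathbf{y}$. In particular $\psi(\mathbf{1}, \mathbf{y}) = 1$, since the weights that define $\psi$ sum to one.

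The plan is as follows. First, identify the real and imaginary parts of each root: matching $Q_{\a_j}(X,Z) = X^2 - 2x_j XZ + (x_j^2+y_j^2) Z^2$ with $X^2 + a_j XZ + b_j Z^2$ gives $x_j = -a_j/2$ and $y_j = d_j/2$. Feeding these into the formulas of Eq.~\eqref{centroid}, the scale-invariance of $\psi$ in $\mathbf{y}$ absorbs the uniform factor of $1/2$ in the $y_j$'s, and linearity in $\mathbf{x}$ immediately yields $t = -\frac{1}{2}\psi(\mathbf{a}, \mathbf{d})$.

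For $u^2$, the key substitution is $q_j(t) = Q_{\a_j}(t,1) = t^2 + a_j t + b_j$ from Eq.~\eqref{eq_q}, so that $\mathbf{q} = t^2\,\mathbf{1} + t\,\mathbf{a} + \mathbf{b}$. Applying linearity of $\psi$ in its first slot together with $\psi(\mathbf{1}, \mathbf{d}) = 1$ produces $u^2 = \psi(\mathbf{q},\mathbf{d}) = t^2 + t\,\psi(\mathbf{a}, \mathbf{d}) + \psi(\mathbf{b}, \mathbf{d})$, and plugging in the value of $t$ collapses the first two terms to $-\frac{1}{4}\psi^2(\mathbf{a}, \mathbf{d})$, yielding the claimed formula.

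The field-of-definition claim then drops out almost for free, once one writes $\J^{\mathcal C}_F = X^2 - 2t\,XZ + (t^2 + u^2)\,Z^2$; a short computation shows that this equals $X^2 + \psi(\mathbf{a},\mathbf{d})\,XZ + \psi(\mathbf{b},\mathbf{d})\,Z^2$, whose coefficients are rational functions of the $a_i, b_i, d_i$, hence lie in $\Q(d_1, \dots, d_r)$. There is no substantial obstacle anywhere; the only thing to be careful about is the bookkeeping with $\psi$'s homogeneity and linearity, which is exactly what makes the $r$-factor generalization no harder than the case $r = 3$ treated in the preceding lemma.
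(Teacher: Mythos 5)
Your proof is correct and follows essentially the same route as the paper's proof of the preceding lemma (the paper itself only remarks that the $r$-factor case is a straightforward generalization): substitute $x_j=-a_j/2$, $y_j=d_j/2$, use the degree-zero homogeneity of $\psi$ in its second slot and its linearity in the first to get $t=-\tfrac12\psi(\mathbf{a},\mathbf{d})$ and $u^2=t^2+t\psi(\mathbf{a},\mathbf{d})+\psi(\mathbf{b},\mathbf{d})$, then plug in $t$. Your closing observation that $\J^{\mathcal C}_F=X^2+\psi(\mathbf{a},\mathbf{d})XZ+\psi(\mathbf{b},\mathbf{d})Z^2$ is a clean way to read off the field of definition (indeed $\Q(d_1,\dots,d_r)$, with no further square roots needed) and slightly sharpens the statement as printed.
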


\begin{rem} Substituting $q_j=(t-x_j)^2+y_j^2$ in the formula for $u^2$, we obtain the following alternative presentations for $t$ and $u^2$. Here interested readers can see various symmetries with respect to the coefficients of the quadratic factors.
\begin{equation}
\begin{split}
  t & =  - \frac 1 {2\mathfrak s_{n-1}}  \,     \sum_{i=1 }^r d_1 \cdots  d_{i-1}a_id_{i+1}\cdots d_r  \, ,~ \\
   u^2 &  = \frac  1   { 4 \mathfrak s_{n-1}^2}     \, \prod_{i=1}^r d_i      \,  
   \left( \mathfrak s_{n-1} \, \sum_{i=1}^r d_i  +      \sum_{i}^r \, d_1 \cdots \hat{d_i}\cdots \hat{d_j}\cdots d_r  \left( a_i - a_j  \right)^2    \right)    \\
\mathfrak s_{n-1} & = \sum_{i=1 }^r d_1 \cdots  d_{i-1}\hat{d_i}d_{i+1}\cdots d_r\\   
\end{split}
\end{equation}
where $\hat{x}$ denotes a missing $x$.
\end{rem}

It would be interesting to express $\z(F)$ in terms of invariants of $F$ or symmetries of the roots of $F$, and as a more overarching goal, to incorporate the real roots of the binary form $F$ in this approach. We will continue to explore these issues.


\bibliographystyle{amsalpha} 

\bibliography{ref}{}

\end{document}